\newcommand{\so}{\mathrm{o}}
\newcommand{\lo}{\mathrm{O}}
\newcommand{\ep}{\epsilon}
\newcommand{\de}{\delta}
\newcommand{\si}{\sigma}
\newcommand{\E}{\mathrm{E}}
\newcommand{\dmin}{D_{\mathrm{min}}}
\newcommand{\n}{\nonumber}
\newcommand{\nn}{\nonumber\\}
\newcommand{\fhat}{\hat{F}}
\newcommand{\muhat}{\hat{\mu}}
\newcommand{\fhatn}[2]{\fhat_{#1}(#2)}
\newcommand{\muhatn}[2]{\muhat_{#1}(#2)}
\newcommand{\rd}{\mathrm{d}}
\newcommand{\dmo}[1]{\bm{M}^{(#1)}}
\newcommand{\jni}[2]{J_{#2}(#1)}
\newcommand{\mf}{\mathcal{F}}
\newcommand{\mom}{\frac{1}{1-\mu}}
\newcommand{\momss}{(1-\mu)^{-1}}
\newcommand{\mo}[1]{\bm{M}}
\newcommand{\moo}[1]{\tilde{\bm{M}}}
\newcommand{\moap}[1]{\tilde{\bm{M}}}
\newcommand{\moapo}{M}
\newcommand{\omo}[2]{\E^{(#1)}(#2)}
\newcommand{\dminm}[1]{\dmin^{(#1)}}
\newcommand{\bibun}[2]{\frac{\rd #1}{\rd #2}}
\newcommand{\mohat}[3]{\omo{#1}{\fhatn{#2}{#3}}}
\newcommand{\mass}[1]{\de\left(#1\right)}
\newcommand{\vs}{\mathcal{V}}
\newcommand{\com}{\enspace ,}
\newcommand{\per}{\enspace .}
\newcommand{\mm}{\mathcal{M}}
\newcommand{\uF}{\underline{F}}
\newcommand{\bF}{\bar{F}}
\newcommand{\dmint}[1]{\tilde{D}^{(#1)}_{\mathrm{min}}}
\newcommand{\ta}{p}
\newcommand{\tb}{q}
\newcommand{\mfm}[1]{\mf(#1)}
\newcommand{\bG}{\bar{G}}
\title{Stochastic Bandit Based on Empirical Moments}
\author{Junya Honda\inst{1}\thanks{Supported by JSPS Research Fellowships for Young Scientists.}
and Akimichi Takemura\inst{2}\thanks{Supported by Aihara Project, the FIRST program from JSPS.}}
\institute{
Graduate school of Frontier Sciences,
\and
Graduate school of Information Science and Technology,\\
The University of Tokyo, Japan.\\
\email{\small \{honda,takemura\}@stat.t.u-tokyo.ac.jp}}
\begin{document}
\maketitle

\begin{abstract}
In the multiarmed bandit problem a gambler chooses an arm of a slot machine to pull
considering a tradeoff between exploration and exploitation.
We study the stochastic bandit problem where each arm has a reward
distribution supported in a known bounded interval, e.g. $[0,1]$.
For this model, 	policies which take into account the empirical variances
(i.e. second moments) of the arms are known to perform effectively.
In this paper, we generalize this idea
and we propose a policy which exploits the first $d$ empirical moments for arbitrary $d$
fixed in advance. 
The asymptotic upper bound of the regret of the policy approaches
the theoretical bound
by Burnetas and Katehakis as $d$ increases.
By choosing appropriate $d$, the proposed policy realizes a tradeoff between
the computational complexity and the expected regret.
\end{abstract}

\section{Introduction}
\label{section-intro}
The multiarmed bandit problem is one of the formulations of the tradeoff
between exploration and exploitation.
This problem is based on an analogy with a gambler playing a slot machine with
more than one arm.
The gambler pulls arms sequentially so that the total reward is maximized.

We consider a $K$-armed stochastic bandit problem originally considered in \cite{lai}.
There are $K$ arms and each arm $i=1,\cdots,K$
 has a probability distribution $F_i$ with the expected value
$\mu_i$.
The gambler chooses an arm to pull based on a policy and
 receives a reward according to $F_i$ independently in
each round.
We call an arm $i$ optimal if $\mu_i=\mu^*$ and suboptimal if $\mu_i<\mu^*$.
Then, the goal of the gambler is to maximize the sum of the rewards
by pulling optimal arms
as often as possible.
Many researches have been conducted for the stochastic bandit problem
\cite{conti1,even,meuleau,strens,vermorel,gittins}
as well as the non-stochastic bandit \cite{adversarial, uchiya}.

In this paper we consider the model $\mf$, the family of distributions with supports contained
in the bounded interval $[0,1]$. 
The gambler knows that each distribution $F_i$ is included in $\mf$.
For this model Upper Confidence Bound (UCB) policies are popular for their simple
 form and fine performance \cite{ucb,ucbv}.
%
Recently Honda and Takemura \cite{honda_colt} proposed
Deterministic Minimum Empirical \allowbreak Divergence (DMED) policy which satisfies
for arbitrary suboptimal arm $i$ that
\begin{eqnarray}
\E[T_i(n)] \le \left(\frac{1}{\dmin(F_i,\mu^*)}+\so(1)\right)\log n \label{ti_opt}
\end{eqnarray}
where $T_i(n)$ denotes the number of times that arm $i$ has been pulled over the first $n$ round
and
\begin{eqnarray}
\dmin(F,\mu)\equiv \min_{G\in\mf: \E_G[X]\ge \mu}D(F\Vert G)\n
\end{eqnarray}
with Kullback-Leibler divergence $D(\cdot\Vert\cdot)$.
DMED is asymptotically optimal since
the coefficient of $\log n$ on the right-hand side of \eqref{ti_opt} coincides with the theoretical bound given in \cite{burnetas}.
However, the complexity of the DMED policy is still larger than e.g. UCB policies,
although the computation involved in DMED is formulated
as a univariate convex optimization.
It is mainly because DMED requires the empirical distributions of the arms themselves
whereas other popular policies can be computed by the moments of the empirical distributions
of the arms, such as means and variances.

Now, our question is how we can bring the performance
close to the right-hand side of \eqref{ti_opt} 
by a policy which only considers the first $d$ empirical moments of the arms at each round.
In this paper, we propose {\it DMED-M} policy which is a variant of DMED and
is computable only by the empirical \underline{m}oments of the arms.
For arbitrary suboptimal arm $i$,
DMED-M satisfies
\begin{eqnarray}
\E[T_i(n)]\le \left(\frac{1}{\inf_{F\in\mf: \omo{d}{F}=\omo{d}{F_i}}\dmin(F,\mu^*)}
+\so(1)\right)\log n\com \label{ti_conj}
\end{eqnarray}
where $\omo{d}{F}\equiv(\E_{F_i}[X],\cdots,\E_{F_i}[X^d])$ denotes the
first $d$ moments of $F$ and this upper bound approaches \eqref{ti_opt} as $d\to\infty$.

DMED-M is obtained by an analogy with DMED.
Intuitively, DMED exploits the fact that the maximum likelihood that the arm with
empirical distribution $F_i$ is actually the best
is roughly $\exp(-t \dmin(F_i,\mu^*))$ for number of samples $t$.
When ignoring properties of the distribution $F_i$ except for
its first $d$ moments, we overestimate the maximum likelihood as
\begin{eqnarray}
\exp\Big(-t \inf_{
F\in\mf: \omo{d}{F}=\omo{d}{F_i}}\dmin(F,\mu^*)\Big)\n
\end{eqnarray}
instead of $\exp(-t \dmin(F_i,\mu^*))$
and the bound \eqref{ti_conj} appears correspondingly.

In DMED-M, it is necessary to compute
$\inf_{F\in\mf: \omo{d}{F}=(M_1,\cdots,M_d)}\allowbreak \dmin(F,\mu)$ for each round.
Classical results on {\it Tchebysheff systems} and {\it moment spaces} reveal that
$\bF$ attaining the infimum is
determined only by the value of the first $d$ moments $(M_1,\cdots,M_d)$
when the objective function $\dmin(\,\cdot\,,\mu)$ is included in a particular class.
Therefore the infimum is obtained by computing firstly the optimal solution
$\bF$ and then the value of the function $\dmin(\bF,\mu)$.
Both are obtained by solving polynomial equations and
DMED-M can be computed efficiently for small $d$.

This paper is organized as follows.
In Sect.\,\ref{section-preliminary}, we give definitions used
throughout this paper.
We propose DMED-M policy in Sect.\,\ref{section-proposed-policy}.
In Sect.\,\ref{section-dmin}, we study the minimization of $\dmin$ over distributions
whose first $d$ moments are common
for a practical implementation of DMED-M.
Proofs of results in Sects.\,\ref{section-proposed-policy} and \ref{section-dmin}
are given in Sect.\,\ref{sec_proofs}.
In Sect.\,\ref{section_gap}, we discuss an improvement of DMED-M
in terms of the worst case performance.
We present some simulation results on DMED-M in Sect.\,\ref{sect_sim}.
We conclude the paper with some remarks in Sect.\,\ref{section-remarks}.

\section{Preliminaries}\label{section-preliminary}
Let $\mf$ be the family of probability distributions on $[0,1]$
and $F_i\in\mf$ be the distribution of the arm $i=1,\dots,K$.
$\E_F[\cdot]$ denotes the expectation under $F\in\mf$.
When we write e.g. 
$\E_F[u(X)]$ for a function $u:\bbbr\to\bbbr$,
$X$ denotes a random variable with distribution $F$.
A set of probability distributions for $K$ arms is denoted by
$\bm{F}\equiv (F_1,\dots,F_K)\in \mathcal{F}^K\equiv\prod_{i=1}^K
\mathcal{F}$.
The expected value of arm $i$ is denoted by $\mu_i\equiv\E_{F_i}[X]$
and the optimal expected value is denoted by $\mu^*\equiv \max_{i}\mu_i$.

Let $T_i(n)$ be the number of times that arm $i$ has been pulled through the first $n$ rounds.
$\fhatn{i}{n}$ and $\muhatn{i}{n}$ denote the empirical distribution and the mean of arm $i$
after the first $n$ rounds, respectively.
$\hat{\mu}^*(n)\equiv \max_i\muhatn{i}{n}$ denotes the highest
empirical mean after the first $n$ rounds.
We call an arm $i$ a current best if $\muhatn{i}{n}=\hat\mu^*(n)$.

Now we review results in \cite{honda_colt}.
Define an index for $F\in \mathcal{F}$ and $\mu\in [0,1]$
\begin{eqnarray}
\dmin(F,\mu)\equiv \min_{G\in\mathcal{F}:\E(G)\ge \mu}D(F\Vert G)\com\n
\end{eqnarray}
where Kullback-Leibler divergence $D(F\Vert G)$ is given by
\begin{eqnarray}
D(F\Vert G)\equiv \begin{cases}
\E_F\left[\log
	       \frac{\mathrm{d}F}{\mathrm{d}G}\right]&\frac{\mathrm{d}F}{\mathrm{d}G}
\mbox{ exists,}\\
+\infty&\mbox{otherwise.}
\end{cases}\n
\end{eqnarray}
Under DMED policy proposed in \cite{honda_colt},
the expectation of $T_i(n)$ for any suboptimal arm $i$
is bounded as
\begin{eqnarray}
\E_{\bm{F}}[T_i(n)]\le \frac{1+\ep}{\dmin(F_i,\mu^*)}\log n
+\lo(1) \label{regret_dmed}
\end{eqnarray}
where $\ep>0$ is arbitrary.
The coefficient of the logarithmic term $1/\dmin(F_i,\mu^*)$ is the best possible \cite{burnetas}
and the following property holds for the function $\dmin(F,\mu)$.
\begin{proposition}[{\cite[Theorems 5 and 8]{honda_colt}}]
If $\E_F[X]\ge \mu$ then $\dmin(F,\mu)=0$.
If $\E_F[X]<\mu=1$ then $\dmin(F,\mu)=\infty$.
If $\E_F[X]<\mu<1$,
\begin{eqnarray}
\dmin(F,\mu)
&=&
\max_{0\le\nu\le\mom}\E_F[\log(1-(X-\mu)\nu)]\nn
&=&
\begin{cases}
\E_F\left[\log\left(1-X\right)\right]-\log(1-\mu)&\E_F\left[\frac{1}{1-X}\right]\le \frac{1}{1-\mu},\\
\max_{0<\nu<\mom}\E_F[\log(1-(X-\mu)\nu)]&\mathrm{otherwise},
\end{cases}\n
\end{eqnarray}
where we define $\log 0\equiv -\infty$.
\end{proposition}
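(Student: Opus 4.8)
\medskip
\noindent\emph{Proof plan.}
The plan is to read this as the convex program of minimizing $D(F\Vert G)$ over probability measures $G$ on $[0,1]$ subject to the single moment constraint $\E_G[X]\ge\mu$, and to handle it by the standard recipe: a short change-of-measure lower bound in which $\nu$ plays the role of the multiplier of the mean constraint, matched by an explicit construction of the optimal $G$. The two degenerate cases come first and for free. If $\E_F[X]\ge\mu$, then $G=F$ is feasible with $D(F\Vert F)=0$, while $D(F\Vert G)\ge0$ always, so $\dmin(F,\mu)=0$. If $\E_F[X]<\mu=1$, the only $G\in\mf$ with $\E_G[X]\ge1$ is the unit mass at $1$, which does not dominate $F$, so $\rd F/\rd G$ does not exist and $\dmin(F,\mu)=\infty$.

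Now suppose $\E_F[X]<\mu<1$. Fix $G\in\mf$ with $F\ll G$ (all other $G$ give $D=\infty$) and $\E_G[X]\ge\mu$, fix $\nu\in[0,\mom)$, and note $1-(X-\mu)\nu>0$ on $[0,1]$ because $(X-\mu)\nu<1$ there. Put $Z_\nu\equiv\E_G[1-(X-\mu)\nu]=1-\nu(\E_G[X]-\mu)\le1$, let $G_\nu$ be the tilted probability measure with $\rd G_\nu/\rd G=(1-(X-\mu)\nu)/Z_\nu$, and write $h(\nu)\equiv\E_F[\log(1-(X-\mu)\nu)]$. The Radon--Nikodym chain rule ($F\ll G_\nu$ since $F\ll G$ and $G$, $G_\nu$ are mutually absolutely continuous) gives
\begin{eqnarray*}
D(F\Vert G)
&=&
D(F\Vert G_\nu)+\E_F[\log(1-(X-\mu)\nu)]-\log Z_\nu\\
&\ge&
h(\nu)-\log Z_\nu\;\ge\;h(\nu)\per
\end{eqnarray*}
Taking the supremum over $\nu<\mom$ and then letting $\nu\uparrow\mom$ yields $\dmin(F,\mu)\ge\max_{0\le\nu\le\mom}h(\nu)$.

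It remains to identify the maximizer of $h$ and to realize the bound. For each $x$ the map $\nu\mapsto\log(1-(x-\mu)\nu)$ is concave, so $h$ is concave on $[0,\mom]$; moreover $h(0)=0$ and $h'(0)=\mu-\E_F[X]>0$, so the maximum is attained at some $\nu^*\in(0,\mom]$. Using $1-(X-\mu)\mom=(1-X)/(1-\mu)$, a direct computation gives $h'(\mom)=(1-\mu)\bigl(1-(1-\mu)\E_F[1/(1-X)]\bigr)$, so the maximizer is $\nu^*=\mom$ exactly when $\E_F[1/(1-X)]\le\mom$, where $h(\mom)=\E_F[\log(1-X)]-\log(1-\mu)$, and is an interior stationary point otherwise. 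To attain the lower bound I would then exhibit an optimal $G^*$. In the interior case, take $\rd G^*/\rd F=1/(1-(X-\mu)\nu^*)$: the stationarity $h'(\nu^*)=0$ forces $\E_F[\rd G^*/\rd F]=1$ and $\E_{G^*}[X]=\mu$, and then $D(F\Vert G^*)=\E_F[\log(1-(X-\mu)\nu^*)]=h(\nu^*)$. In the endpoint case, take $\rd G^*/\rd F=(1-\mu)/(1-X)$ on $[0,1)$ and place the deficit $1-(1-\mu)\E_F[1/(1-X)]\ge0$ as an atom at $1$ — admissible because $\E_F[1/(1-X)]<\infty$ forces $F(\{1\})=0$ — so that $\E_{G^*}[X]=\mu$ and $D(F\Vert G^*)=\E_F[\log(1-X)]-\log(1-\mu)=h(\mom)$.

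The delicate part, I expect, is the endpoint case and the surrounding bookkeeping at the boundary: recognizing that the ``natural'' density $(1-\mu)/(1-X)$ may carry total mass strictly below one and must be repaired by an atom at $1$ without changing the divergence, and checking in all branches that $\E_F[1/(1-X)]$, $\E_F[\log(1-X)]$ and $h(\nu)$ as $\nu\uparrow\mom$ are finite, or correctly $+\infty$, so that the concavity argument, the differentiation at $\mom$, and the monotone limit $\nu\uparrow\mom$ are legitimate.
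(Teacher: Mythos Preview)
Your argument is correct: the change-of-measure lower bound with the exponential tilt $\rd G_\nu/\rd G\propto 1-(X-\mu)\nu$, together with the explicit construction of $G^*$ in the interior and endpoint cases, is exactly the standard convex-duality derivation of this formula, and your endpoint repair (placing the mass deficit at $1$, which is $F$-null because $\E_F[1/(1-X)]<\infty$) is the right way to close the gap.

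However, there is nothing in the present paper to compare it against. The proposition is not proved here; it is quoted verbatim from \cite{honda_colt} (Theorems~5 and~8 there) as background in the Preliminaries section, and the paper uses only the statement, never the argument. So your write-up is a self-contained proof of a cited result rather than a reconstruction of anything the authors do in this paper. If the intent was to supply a proof where the paper gives none, your plan is sound; if the intent was to match the paper, there is no proof to match.
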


Let $\omo{d}{F}\equiv (\E_F[X],\cdots,\E_F[X^d])$ denote the first $d$ moments of $F$.
The set of distributions with the first $d$ moments equal to $\mo{d}=(M_1,\cdots,M_d)$ is defined as
$\mfm{\mo{d}}\equiv\{F\in \mf: \omo{d}{F}=\mo{d} \}$.
We sometimes write $\dmo{d}$ instead of $\mo{d}$ to clarify the length of the vector.

Now define $\dminm{d}(\mo{d},\mu)$ by
\begin{eqnarray}
\dminm{d}(\mo{d},\mu)
&\equiv&
\inf_{F\in\mfm{\mo{d}}}\dmin(F,\mu)
\per\n
\end{eqnarray}
This function $\dminm{d}$ plays a central role throughout this paper.

%
%

\section{DMED-M Policy}\label{section-proposed-policy}
In this section we introduce DMED-M policy.
This policy determines an arm to pull based on the empirical moments of the arms.
DMED-M requires computation of the function $\dminm{d}$ and
we analyze this function in the next section.

In the following algorithm, each arm is pulled at most once in one loop.
Through the loop, the list of arms pulled in the next loop is determined.
$L_C$ denotes the list of arms to be pulled in the current loop.
$L_N$ denotes the list of arms to be pulled in the next loop.
$L_R\subset L_C$ denotes the list of remaining arms of $L_C$ which have
not yet been pulled in the current loop. 
The criterion for choosing an arm $i$ is the occurrence of the event $\jni{n}{i}$ given by
\begin{eqnarray}
\jni{n}{i}\equiv\{T_i(n)\dminm{d}(\mohat{d}{i}{n},\muhat^*(n))
\le \log n-\log T_i(n)\},\label{def-jn}
\end{eqnarray}
where $\omo{d}{\fhatn{i}{n}}$ represents the first $d$ empirical moments of arm $i$.

\begin{quote}
{\bf [DMED-M Policy]}

{\bf Parameter.}
Integer $d>0$.

{\bf Initialization.}
 $L_C,L_R:=\{1,\cdots,K\},\,L_N:=\emptyset$.
 Pull each arm  once.
$n:=K$.

{\bf Loop.}
\begin{itemize}
\item[1.] For $i\in L_C$ in the ascending order,
\begin{itemize}
\item[1.1.] $n:=n+1$ and pull arm $i$. $L_R:=L_R\setminus\{i\}$.
\item[1.2.] $L_N:=L_N\cup \{j\}$ (without a duplicate) for all
$j\notin L_R$
 such that $\jni{n}{j}$ occurs.
\end{itemize}
 \item[2.] $L_C,L_R:=L_N$ and $L_N:=\emptyset$.
\end{itemize}
\end{quote}
As shown above, $|L_C|$ arms are pulled in one loop.
At every round, arm $i$ is added to $L_N$ if $\jni{n}{i}$ occurs unless
$i\in L_R$, that is, arm $i$ is planned to be pulled in the remaining rounds in the current loop.
Note that if arm $i$ is a current best for the $n$-th round then
 $\jni{n}{i}$ holds since $\dmin(\mohat{m}{i}{n},\muhat^*(n))=0$ for this case.
Then $L_C$ is never empty.
Note that DMED in \cite{honda_colt} is obtained by replacing
$\dminm{d}(\mohat{d}{i}{n},\muhat^*(n))$ in \eqref{def-jn} by $\dmin(\fhatn{i}{n},\mu)$.
In view of Theorem \ref{thm_approach} below, 
DMED can be regarded as DMED-M with $d=\infty$.

\begin{theorem}\label{optMD}
Fix $\bm{F}\in\mf^K$ for which there exists a unique optimal arm $j$.
Under DMED-M policy, for any suboptimal arm $i$ and $\ep>0$ it holds that
\begin{eqnarray}
\E_{\bm{F}}[T_i(n)]\le \frac{1+\ep}{\dminm{d}(\omo{d}{F_i},\mu^*)}\log n
+\lo(1)\n
\end{eqnarray}
where $\lo(1)$ denotes a constant
dependent on $\ep$ and $\bm{F}$ but independent of $n$.
\end{theorem}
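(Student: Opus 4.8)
The plan is to mirror the regret analysis of DMED in \cite{honda_colt}, replacing $\dmin$ by $\dminm{d}$ throughout; the only genuinely new input is a few properties of $\dminm{d}$, all to be established in Sect.\,\ref{section-dmin}. Specifically I will use that $\dminm{d}(\dmo{d},\mu)=0$ whenever $M_1\ge\mu$ (so that $\jni{n}{i}$ holds automatically whenever arm $i$ is a current best, as already noted after \eqref{def-jn}), that $\dminm{d}$ is continuous at $(\omo{d}{F_i},\mu^*)$, and that $\dminm{d}(\omo{d}{F_i},\mu^*)>0$. The last point needs no new machinery: since the first coordinate of $\omo{d}{F_i}$ is $\mu_i$, every $F\in\mfm{\omo{d}{F_i}}$ has mean $\mu_i<\mu^*$ and hence $\dmin(F,\mu^*)>0$, and positivity of the infimum over this set follows from the analysis in Sect.\,\ref{section-dmin}.

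First I would reduce the bound on $T_i(n)$ to a count of rounds at which $\jni{s}{i}$ occurs. A routine bookkeeping of the loop structure (each arm is pulled at most once per loop, and is placed in the next loop only upon an occurrence of $\jni{s}{i}$) shows that arm $i$ is pulled for the $(k+1)$-st time within the first $n$ rounds only if there is a round $s\le n$ with $T_i(s)=k$ at which $\jni{s}{i}$ holds, i.e.\ $k\,\dminm{d}(\mohat{d}{i}{s},\muhat^*(s))\le\log s-\log k\le\log n$. Fixing small $\de,\de'>0$, I split such rounds into a \emph{regular} part, where $\muhat^*(s)\ge\mu^*-\de$ and $\|\mohat{d}{i}{s}-\omo{d}{F_i}\|\le\de'$ both hold, and an \emph{exceptional} part.

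On a regular round, continuity and positivity of $\dminm{d}$ give, once $\de,\de'$ are chosen small enough in terms of $\ep$, the inequality $\dminm{d}(\mohat{d}{i}{s},\muhat^*(s))\ge\dminm{d}(\omo{d}{F_i},\mu^*)/(1+\ep)$, so that $\jni{s}{i}$ with $T_i(s)=k$ forces $k\le(1+\ep)\log n/\dminm{d}(\omo{d}{F_i},\mu^*)$. Hence $T_i(n)$ can exceed $(1+\ep)\log n/\dminm{d}(\omo{d}{F_i},\mu^*)+\lo(1)$ only via exceptional rounds, and it remains to bound the expected number of indices $k$ whose defining round $s$ is exceptional. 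The contribution of rounds with $\|\mohat{d}{i}{s}-\omo{d}{F_i}\|>\de'$ is $\lo(1)$: conditioned on $T_i(s)=k$, the vector $\mohat{d}{i}{s}$ is formed from the empirical moments of $k$ i.i.d.\ draws from $F_i$, each an average of $[0,1]$-valued variables, so Hoeffding's inequality bounds the relevant probability by $\lo(e^{-ck})$ for some $c=c(\de',d)>0$, which is summable in $k$.

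The remaining and hardest part is to bound $\E_{\bm{F}}[\#\{s\le n:\muhat^*(s)<\mu^*-\de\}]$. Since $\muhat^*(s)\ge\muhatn{j}{s}$, this event forces $\muhatn{j}{s}<\mu^*-\de$, so it suffices to show that the unique optimal arm $j$ is sampled often enough that its empirical mean rarely stays this low; here I would reuse the self-correction argument of \cite{honda_colt}. In any loop in which $j$ is not pulled, $j$ was never a current best during the previous loop, so at each round of that loop some suboptimal arm with empirical mean exceeding $\muhatn{j}{s}$ was the current best and was pulled; such an arm therefore keeps accumulating samples and its empirical mean concentrates toward its true value $<\mu^*$, which (together with $\muhatn{i}{s}>\muhatn{j}{s}$) eventually requires a large deviation of some arm's empirical mean, and a union bound over the $K$ arms and over rounds shows the exceptional set has $\lo(1)$ expected cardinality. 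Combining the three estimates gives $\E_{\bm{F}}[T_i(n)]\le(1+\ep)\log n/\dminm{d}(\omo{d}{F_i},\mu^*)+\lo(1)$. I expect the two real obstacles to be precisely (a) the loop-structure bookkeeping that licenses the reduction to counting occurrences of $\jni{s}{i}$ and (b) making the self-correction bound for $\muhat^*$ quantitative; both are handled in \cite{honda_colt} for the case $d=\infty$ and carry over once the properties of $\dminm{d}$ above are in place.
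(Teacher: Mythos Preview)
Your proposal is correct and follows essentially the same route the paper indicates: the paper does not give a proof but states that it ``can be proved in a similar way as Theorem~4 of \cite{honda_colt} with the fact that $\dmin(F,\mu)\ge\dminm{d}(\omo{d}{F},\mu)$ always holds,'' and your sketch is precisely an unpacking of that strategy.

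One small difference worth noting: the paper singles out the inequality $\dmin(F,\mu)\ge\dminm{d}(\omo{d}{F},\mu)$ as the only new ingredient, whereas you emphasize continuity and positivity of $\dminm{d}$. These are complementary. The inequality is most useful in the self-correction step for the optimal arm $j$: if $\jni{s}{j}$ fails under DMED-M then $T_j(s)\dminm{d}(\mohat{d}{j}{s},\muhat^*(s))>\log(s/T_j(s))$, and the inequality immediately gives $T_j(s)\dmin(\fhatn{j}{s},\muhat^*(s))>\log(s/T_j(s))$, so the large-deviation estimates already proved for DMED apply verbatim without redoing that part of the argument. Your route---rerunning the self-correction bound directly for $\dminm{d}$---also works but is slightly more labor. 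Conversely, your continuity/positivity properties are what make the ``regular'' part go through, and they are implicit in (and follow from) the representation $\dminm{d}(\mo{d},\mu)=\dmin(\bF,\mu)$ of Theorem~\ref{thm_dminm} together with Prop.~\ref{thm_conti}.
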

This theorem can be proved in a similar way as Theorem 4 of \cite{honda_colt} with
the fact that $\dmin(F,\mu)\allowbreak \ge \dminm{d}(\omo{d}{F},\mu)$ always holds.
However, we omit the proof
because it is long and very similar to 
the proof of Theorem 4 of \cite{honda_colt}.
The bound in Theorem \ref{optMD} approaches that of DMED given by \eqref{regret_dmed}
as $d\to\infty$ from the following theorem,
which we show in Sect.\,\ref{sec_proofs}.

\begin{theorem}\label{thm_approach}
For arbitrary $F\in\mf$ it holds that
\begin{eqnarray}
\lim_{d\to\infty}\dminm{d}(\omo{d}{F},\mu)=\dmin(F,\mu)\per\n
\end{eqnarray}
\end{theorem}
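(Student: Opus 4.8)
The plan is to prove the two inequalities $\liminf_{d\to\infty}\dminm{d}(\omo{d}{F},\mu)\ge\dmin(F,\mu)$ and $\limsup_{d\to\infty}\dminm{d}(\omo{d}{F},\mu)\le\dmin(F,\mu)$ separately. The second is trivial: since $F\in\mfm{\omo{d}{F}}$ for every $d$, we have $\dminm{d}(\omo{d}{F},\mu)\le\dmin(F,\mu)$ for all $d$, so the $\limsup$ bound is immediate. Moreover $\dminm{d}(\omo{d}{F},\mu)$ is nondecreasing in $d$, because $\mfm{\omo{d+1}{F}}\subseteq\mfm{\omo{d}{F}}$ (matching more moments is a stronger constraint), so the limit exists and it suffices to show the $\liminf$, i.e. that the monotone limit is not strictly below $\dmin(F,\mu)$.

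For the $\liminf$ direction I would exploit weak compactness of $\mf$: the space of probability measures on the compact interval $[0,1]$ is weakly compact. For each $d$ pick $F_d\in\mfm{\omo{d}{F}}$ with $\dmin(F_d,\mu)\le\dminm{d}(\omo{d}{F},\mu)+1/d$ (an infimum-achieving or near-achieving sequence). Pass to a weakly convergent subsequence $F_{d_k}\rightharpoonup G$. Because $X^j$ is a bounded continuous function on $[0,1]$, weak convergence gives $\E_{G}[X^j]=\lim_k\E_{F_{d_k}}[X^j]=\E_F[X^j]$ for every fixed $j$ (once $d_k\ge j$ the moment is pinned to $\E_F[X^j]$ exactly). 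A probability measure on a bounded interval is determined by its moment sequence (Hausdorff moment problem), so $G=F$. The remaining task is a lower-semicontinuity argument: I want $\dmin(F,\mu)=\dmin(G,\mu)\le\liminf_k\dmin(F_{d_k},\mu)\le\lim_{d\to\infty}\dminm{d}(\omo{d}{F},\mu)$, which closes the gap.

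The main obstacle is establishing that lower semicontinuity, since $\dmin(\cdot,\mu)$ involves a Kullback--Leibler divergence and KL divergence is only lower semicontinuous in a rather delicate way (and can jump to $+\infty$). Here the variational formula in Proposition~1 is the key tool rather than the definition: for $\mu<1$ and $\E_F[X]<\mu$ we have $\dmin(F,\mu)=\max_{0\le\nu\le\mom}\E_F[\log(1-(X-\mu)\nu)]$. For each fixed $\nu$ in this range, $x\mapsto\log(1-(x-\mu)\nu)$ is a bounded continuous function on $[0,1]$ (the argument $1-(x-\mu)\nu$ stays in a compact subinterval of $(0,\infty)$ except possibly at the endpoint $\nu=\mom$, $x=0$, which I would handle by first taking $\nu<\mom$ and then letting $\nu\uparrow\mom$, or by noting the integrand is bounded above), so $\E_{F_{d_k}}[\log(1-(X-\mu)\nu)]\to\E_F[\log(1-(X-\mu)\nu)]$ by weak convergence; taking the supremum over $\nu$ and then the $\liminf$ yields $\dmin(F,\mu)\le\liminf_k\dmin(F_{d_k},\mu)$. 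I would also need to dispatch the boundary cases: if $\E_F[X]\ge\mu$ then $\dmin(F,\mu)=0$ and the claim is trivial; if $\mu=1$ and $\E_F[X]<1$ then $\dmin(F,\mu)=\infty$, and one checks that $\dminm{d}(\omo{d}{F},1)\to\infty$ as well, e.g. because any $F_d$ with the correct first moment $M_1<1$ satisfies $\dmin(F_d,1)\ge$ a quantity diverging with $d$—this case may require a short separate argument, perhaps showing $\dminm{d}(\dmo{d},1)\ge-\log(1-M_1^{1/1})$-type growth, or simply invoking continuity of $\dminm{d}(\dmo{d},\mu)$ in $\mu$ up to the endpoint.
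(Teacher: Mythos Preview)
Your argument is correct and follows the same skeleton as the paper: weak compactness of $\mf$, Hausdorff moment determinacy to force any limit point of a (near-)minimizing sequence to equal $F$, and then a continuity-type property of $\dmin(\cdot,\mu)$ to close. The only substantive difference is that the paper invokes \emph{full} continuity of $\dmin(\cdot,\mu)$ with respect to the L\'evy distance as an established result (Proposition~\ref{thm_conti}, imported from \cite{honda_colt}) and in fact proves the slightly stronger statement that the whole set $\mfm{\omo{d}{F}}$ shrinks to $\{F\}$ in L\'evy distance, whereas you derive lower semicontinuity directly from the variational formula in Proposition~1 along your chosen sequence. Your route is more self-contained; the paper's is shorter given the citation.

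Two small corrections. First, the singularity of $\log(1-(x-\mu)\nu)$ at the endpoint $\nu=\momss$ occurs at $x=1$, not $x=0$ (since $1-(1-\mu)\cdot\momss=0$); your proposed fix---restrict to $\nu<\momss$, where the integrand is bounded and continuous, then use concavity in $\nu$ to recover the full supremum---is exactly right. Second, the boundary case $\mu=1$ with $\E_F[X]<1$ needs no growth estimate at all: every $G\in\mfm{\omo{d}{F}}$ with $d\ge 1$ has $\E_G[X]=M_1<1$, hence $\dmin(G,1)=\infty$, so $\dminm{d}(\omo{d}{F},1)=\infty$ identically in $d$.
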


\section{Practical Representation of $\dminm{d}$}
\label{section-dmin}
For a computation and a theoretical evaluation of DMED,
it is essential to analyze the function
$\dminm{d}(\mo{d},\mu)=\inf_{F\in\mf: \omo{d}{F}=\mo{d}}\dmin(F,\mu)$.
In this section we study an explicit representation of this function.

The following theorem is the main result of the paper.
In this theorem, we identify a pair $(\{x_i\},\,\{f_i\})$ with a discrete
distribution such that $F(\{x_i\})=f_i$.
\begin{theorem}\label{thm_dminm}
If $\mfm{\mo{d}}=\{F\in\mf: \omo{d}{F}=\mo{d}\}$ is nonempty then
there exists
a unique optimal solution $\bar{F}\in \mf$ such that
\begin{eqnarray}
\dminm{d}(\mo{d},\mu)
=\inf_{F\in \mfm{\mo{d}}} \dmin(F,\mu) \label{thm_uniq}
=\dmin(\bar{F},\mu)\per
\end{eqnarray}
Furthermore, $F\in\mf$ is the unique optimal solution $\bar{F}$ if and only if
$\big((x_1,\cdots,x_l),\allowbreak (f_1,\cdots,f_l)\big)$ for $l=\lceil d/2 \rceil+1$ 
is a solution of
\begin{eqnarray}
\begin{cases}
\sum_{i=1}^{l} f_i x_i^m=M_m\;(m=0,\cdots,d),\, x_1=0,\,x_{l}=1,&\mbox{\rm $d$ is odd},\\
\sum_{i=1}^{l}f_i x_i^m=M_m\;(m=0,\cdots,d),\, \phantom{x_1=0,\,}\,x_{l}=1, &\mbox{\rm $d$ is even},
\end{cases}
 \label{thm_ifonly}
\end{eqnarray}
where we define the zeroth moment as $M_0=1$.
\end{theorem}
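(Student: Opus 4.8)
The plan is to recast $\dminm{d}(\mo{d},\mu)$, via the variational formula for $\dmin$ in the Proposition of Sect.\,\ref{section-preliminary}, as a classical extremal problem over the moment space of $[0,1]$, and then to identify its solution with a \emph{principal representation} of the moment point $\mo{d}$. We treat the principal case $M_1<\mu<1$: if $M_1\ge\mu$ then $\dmin(\,\cdot\,,\mu)\equiv 0$ on $\mfm{\mo{d}}$ and if $\mu=1$ then $\dmin(\,\cdot\,,\mu)\equiv +\infty$, so these cases are degenerate. If $\mo{d}$ lies on the boundary of the moment space then $\mfm{\mo{d}}$ consists of a single distribution and there is nothing to prove, so we assume $\mo{d}$ is interior. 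By that Proposition, for every $F\in\mfm{\mo{d}}$,
\[
\dmin(F,\mu)=\max_{0\le\nu\le\mom}\E_F[g_\nu(X)]\com\qquad g_\nu(x):=\log\left(1-(x-\mu)\nu\right)\com
\]
and for $0<\nu<\mom$ the function $-g_\nu$ is continuous on $[0,1]$ with $(-g_\nu)^{(k)}(x)=(k-1)!\,\nu^k\bigl(1-(x-\mu)\nu\bigr)^{-k}>0$ for every $k\ge 1$.

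The heart of the proof is the following fact. Fix $\nu\in(0,\mom)$. Since $-g_\nu$ has strictly positive $(d+1)$-st derivative, $\{1,x,\dots,x^d,-g_\nu\}$ is a Tchebysheff system on $[0,1]$ whose extremal representations are located exactly as those of $\{1,x,\dots,x^d,x^{d+1}\}$. Hence, by the classical Markov--Krein theory of moment spaces, among all $F\in\mfm{\mo{d}}$ the functional $\E_F[g_\nu(X)]$ is uniquely minimized by the upper principal representation $\bar F$ of the moment point $\mo{d}$. The decisive point is that this extremal representation depends on the adjoined function only through the (common) sign pattern above, so the \emph{same} $\bar F$ minimizes $\E_F[g_\nu(X)]$ for every $\nu\in(0,\mom)$ and is independent of $\mu$; moreover, for power moments on $[0,1]$ the upper principal representation is precisely the discrete distribution on $l=\lceil d/2\rceil+1$ nodes described by \eqref{thm_ifonly} --- both $0$ and $1$ being nodes when $d$ is odd and only $1$ when $d$ is even --- which is the unique solution of \eqref{thm_ifonly} (its unknowns number $d+1$, matching the moment equations $M_0,\dots,M_d$).

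Granting this, \eqref{thm_uniq} follows immediately: $\bar F\in\mfm{\mo{d}}$ gives $\dminm{d}(\mo{d},\mu)\le\dmin(\bar F,\mu)$, while for every $F\in\mfm{\mo{d}}$ and every $\nu$ we have $\dmin(F,\mu)\ge\E_F[g_\nu]\ge\E_{\bar F}[g_\nu]$, and maximizing the right-hand side over $\nu$ gives $\dmin(F,\mu)\ge\dmin(\bar F,\mu)$; hence $\dminm{d}(\mo{d},\mu)=\dmin(\bar F,\mu)$. For uniqueness of the minimizer, choose $\nu^\star$ attaining $\dmin(\bar F,\mu)=\max_\nu\E_{\bar F}[g_\nu]$. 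Since $\E_{\bar F}[X]=M_1<\mu$, the node $1$ of $\bar F$ carries positive mass, and $\nu\mapsto\E_{\bar F}[g_\nu]$ is concave on $[0,\mom)$ with positive derivative at $\nu=0$ and limit $-\infty$ as $\nu\uparrow\mom$; therefore $\nu^\star\in(0,\mom)$ and the previous paragraph applies at $\nu^\star$. If $F\in\mfm{\mo{d}}$ is any minimizer, then
\[
\E_F[g_{\nu^\star}]\le\max_\nu\E_F[g_\nu]=\dmin(F,\mu)=\dmin(\bar F,\mu)=\E_{\bar F}[g_{\nu^\star}]\le\E_F[g_{\nu^\star}]\com
\]
so equality holds throughout, and uniqueness of the minimizer of $\E_{(\cdot)}[g_{\nu^\star}]$ forces $F=\bar F$. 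Since $\bar F$ is exactly the solution of \eqref{thm_ifonly}, this yields both the existence of a unique optimal solution and the claimed ``if and only if''.

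I expect the main obstacle to be the appeal to Tchebysheff-system theory in the second paragraph --- specifically, pinning down the Markov--Krein extremal theorem in the exact form required: that the minimizer over $\mfm{\mo{d}}$ is unique, and, above all, that the extremal principal representation does not depend on which particular concave function $-g_\nu$ ($0<\nu<\mom$) is adjoined, so that one and the same $\bar F$ is optimal simultaneously for all $\nu$ and no genuine minimax interchange is needed. Verifying that this principal representation is the discrete distribution of \eqref{thm_ifonly} with the endpoint pattern dictated by the parity of $d$, and that \eqref{thm_ifonly} is uniquely solvable, is the remaining --- careful but essentially routine --- bookkeeping.
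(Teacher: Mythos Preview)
Your argument is correct and rests on the same Karlin--Studden machinery as the paper (the $T$-system $(1,x,\dots,x^d,-g_\nu)$ and Prop.\,\ref{karlin_main}), but the route is genuinely different. The paper first invokes the minimax theorem to produce a single saddle value $\bar\nu$ with $\dminm{d}(\mo{d},\mu)=\inf_{F}\E_F[g_{\bar\nu}]$, then shows $\bar\nu<\momss$ via Prop.\,\ref{interior_involve}, and finally applies Prop.\,\ref{karlin_main} once at $\nu=\bar\nu$. You bypass the minimax step entirely by exploiting the structural fact that the upper principal representation $\bar F$ depends only on the moment point $\mo{d}$ and not on which admissible $-g_\nu$ is adjoined; hence $\E_F[g_\nu]\ge\E_{\bar F}[g_\nu]$ holds uniformly in $\nu\in(0,\momss)$, and the inf--max interchange becomes a direct sandwich. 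Your verification that the optimal $\nu^\star$ for $\bar F$ lies strictly inside $(0,\momss)$---because $\bar F$ carries positive mass at $1$ so $\E_{\bar F}[g_\nu]\to-\infty$ at the right endpoint---is cleaner than the paper's contradiction argument, and your explicit uniqueness chain $\E_F[g_{\nu^\star}]\le\dmin(F,\mu)=\E_{\bar F}[g_{\nu^\star}]\le\E_F[g_{\nu^\star}]$ makes the ``unique optimal solution'' claim fully transparent, something the paper leaves implicit in the saddle-point formulation. The trade-off is that the paper's minimax route would generalize to objectives where the extremal representation might vary with the inner parameter, whereas your argument leans on the (here valid) observation that it does not.
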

Note that the above $\bF$ only depends on the moment $\mo{d}$.
Then, the value of $\dminm{d}(\mo{d},\mu)$ is obtained by computing $\bF$ first
and then $\dmin(\bF,\mu)$.
Recall that $\dmin(\bar{F},\mu)=\max_{0\le\nu\le\momss}\E_{\bar{F}}[\log (1-(X-\mu)\nu)]$.
Since $\bar{F}$ has finite support $\{x_1,\cdots,x_l\}$,
the optimal solution $\nu^*$ attaining the maximum is
one of the boundary points $0,\,\momss$ or
an interior point $\nu\in(0,\momss)$ such that
\begin{eqnarray}
\bibun{}{\nu}\E_{\bF}[\log (1-(X-\mu)\nu)]=
\frac{\sum_{i=1}^l (\mu-x_i)\prod_{j\neq i}(1-(x_j-\mu)\nu)}{\prod_{i=1}^l(1-(x_i-\mu)\nu)}
=0\com \label{bibun0}
\end{eqnarray}
which is obtained by solving the $l$-th degree polynomial equation.
We give an explicit form of $\dminm{d}(\mo{d},\mu)$ for $d=1,2,3$
in the following theorem.

\begin{theorem}\label{thm_d123}
If $M_1<\mu<1$ and $\mfm{\mo{d}}$ is nonempty, then
$\dminm{d}(\mo{d},\mu)$ is expressed for $d=1,2$ as
\begin{eqnarray}
\dminm{1}(\mo{1},\mu)&=&
(1-M_1)\log\frac{1-M_1}{1-\mu}+M_1\log\frac{M_1}{\mu}\com \nn
\dminm{2}(\mo{2},\mu)&=&
\frac{(1-M_1)^2}{1-2M_1+M_2}\log \left(1-\left(\frac{M_1-M_2}{1-M_1}-\mu\right)\nu^{(2)}\right)\nn
&&+\frac{M_2-M_1^2}{1-2M_1+M_2}\log \left(1-\left(1-\mu\right)\nu^{(2)}\right)\n
\end{eqnarray}
where
\begin{eqnarray}
\nu^{(2)}=\frac{(1-M_1)(M_1-\mu)}{(1-M_1)\mu^2-(1-M_2)\mu+M_1-M_2}\per\n
\end{eqnarray}
For $d=3$ it is expressed as
\begin{eqnarray}
\dminm{3}(\mo{3},\mu)
&=&
\begin{cases}
\dminm{1}(\mo{1},\mu),&M_1=M_2=M_3,\\
\sum_{l=1}^3 f_l \log(1-\bar{x}_l\nu^{(3)})&\mathrm{otherwise},
\end{cases}\n
\end{eqnarray}
where
\begin{align}
&(\bar{x}_1,\bar{x}_2,\bar{x}_3)=\left(-\mu, \frac{M_2-M_3}{M_1-M_2}-\mu,1-\mu\right),\nn
&(f_2,f_3)=\left(\frac{(M_1-M_2)^3}{(M_2-M_3)(M_1-2M_2+M_3)}, \frac{M_1M_3-M_2^2}{M_1-2M_2+M_3}\right)\!,f_1=1-f_2-f_3,\nn\label{f1f2f3}\\
&\nu^{(3)}=\begin{cases}
\frac{-b+\sqrt{b^2+4ac}}{2a},&a\neq 0,\\
\frac{c}{b},&a=0,
\end{cases}\n
\end{align}
for
\begin{eqnarray}
(a,b,c)=\left(\bar{x}_1\bar{x}_2\bar{x}_3,\,
(M_2-2\mu M_1+\mu^2)+(\bar{x}_1+\bar{x}_2+\bar{x}_3)(\mu-M_1),\,
\mu-M_1\right)\per\n
\end{eqnarray}
\end{theorem}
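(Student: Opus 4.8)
The plan is to obtain Theorem~\ref{thm_d123} as an explicit specialization of Theorem~\ref{thm_dminm}. For each $d\in\{1,2,3\}$ I would first solve the characterizing system \eqref{thm_ifonly} in closed form to identify the optimal distribution $\bF$, and then evaluate $\dmin(\bF,\mu)$ through the variational formula $\dmin(\bF,\mu)=\max_{0\le\nu\le\momss}\E_{\bF}[\log(1-(X-\mu)\nu)]$ recalled right after Theorem~\ref{thm_dminm}. Since $l=\lceil d/2\rceil+1$ is $2$ for $d=1,2$ and $3$ for $d=3$, \eqref{thm_ifonly} is a small system of $d+1$ polynomial equations in the free support points and the masses $f_i$: for $d=1$ it is linear, while for $d=2,3$ eliminating the $f_i$ leaves a single polynomial equation determining the one free support point. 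This yields: for $d=1$ the two-point law on $\{0,1\}$ with masses $(1-M_1,M_1)$; for $d=2$ the two-point law on $\{(M_1-M_2)/(1-M_1),\,1\}$ with the displayed masses; for $d=3$ (when the three moments are not all equal) the three-point law on $\{0,\,(M_2-M_3)/(M_1-M_2),\,1\}$ with masses $f_1,f_2,f_3$. The feasibility of $\mfm{\mo{d}}$ together with the elementary bounds $M_{m+1}\le M_m$ and $M_{m-1}M_{m+1}\ge M_m^2$ valid for laws on $[0,1]$ ensures these masses are nonnegative and the support points lie in $[0,1]$.

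Next, writing $\bar x_i=x_i-\mu$, I would evaluate $\dmin(\bF,\mu)$. Because $M_1<\mu<1$ the trivial cases of the Proposition are excluded, and the objective $g(\nu)=\sum_i f_i\log(1-\bar x_i\nu)$ is concave where it is finite, so its maximum over $[0,\momss]$ is attained either at a boundary point or at the unique interior zero of $g'$, which is a root of the numerator of the rational function in \eqref{bibun0}, a polynomial in $\nu$ of degree $l-1$: linear for $d=1,2$ and quadratic for $d=3$. Solving this equation and simplifying with the moment identities $\sum_i f_i\bar x_i=M_1-\mu$, $\sum_i f_i\bar x_i^2=M_2-2\mu M_1+\mu^2$, and $\prod_i\bar x_i=\bar x_1\bar x_2\bar x_3$ reproduces exactly $\nu^{(2)}$ and $\nu^{(3)}$ as stated; substituting back into $g$ and again collapsing terms with the same identities — for instance $1+\mu\nu^{(1)}=(1-M_1)/(1-\mu)$ and $1-(1-\mu)\nu^{(1)}=M_1/\mu$ in the $d=1$ calculation — yields the closed forms in the theorem.

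Finally I would dispatch the degenerate configurations and verify the boundary behaviour. For $d=3$ the expressions for $x_2,f_2,f_3$ become $0/0$ precisely when $M_1=M_2=M_3$; there the variance constraint is extremal, which forces $\bF$ to be the Bernoulli law on $\{0,1\}$ with mean $M_1$, so $\dminm{3}(\mo{3},\mu)=\dminm{1}(\mo{1},\mu)$, i.e.\ the first branch. When $\bF$ degenerates to a point mass at $M_1$ (for $d=2$ this is $M_2=M_1^2$), the atom at $1$ vanishes, $\E_{\bF}[(1-X)^{-1}]\le\momss$, and one lands in the first branch of the Proposition; a short check shows $\nu^{(d)}$ then equals the boundary value $\momss$ and the stated formula still evaluates to $\log\frac{1-M_1}{1-\mu}$, so no separate case is needed. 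I expect the main obstacle to be exactly this bookkeeping: showing that for every feasible $\mo{d}$ the explicit support points and masses are admissible, that $\nu^{(d)}\in[0,\momss]$ with every $1-\bar x_i\nu^{(d)}>0$ (so that $g$ is genuinely maximized there rather than at $\nu=0$), and that the root singled out in the formula for $\nu^{(3)}$ is the correct one — all of which reduce to the moment inequalities above but require careful sign tracking.
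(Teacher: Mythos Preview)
Your proposal is correct and follows essentially the same route as the paper: the paper's proof is the one-line remark that the theorem ``is obtained by solving \eqref{bibun0} with $\bF=\bF^{(d)}$ given in Lemma~\ref{lem_solution}'', where Lemma~\ref{lem_solution} records exactly the closed-form solutions of \eqref{thm_ifonly} that you derive. Your additional care about degenerate moment configurations and about verifying $\nu^{(d)}\in[0,\momss]$ goes beyond what the paper spells out, but does not change the approach.
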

This theorem is obtained by solving \eqref{bibun0} with $\bF=\bF^{(d)}$ given
in Lemma \ref{lem_solution} below.
\begin{lemma}\label{lem_solution}
If $\mfm{\mo{d}}$ is nonempty then
the solution $\bar{F}^{(d)}$of \eqref{thm_ifonly} is expressed for $d=1,2,3$ as
\begin{eqnarray}
\bar{F}^{(1)}&=&(1-M_1)\mass{0}+M_1\mass{1},\nn
\bar{F}^{(2)}&=&\begin{cases}
\mass{1}&M_1=M_2=1,\\
\frac{(1-M_1)^2}{1-2M_1+M_2}\mass{\frac{M_1-M_2}{1-M_1}}
+\frac{M_2-M_1^2}{1-2M_1+M_2}\mass{1}&\mbox{\rm otherwise},
\end{cases}\nn
\bar{F}^{(3)}&=&\begin{cases}
\bar{F}^{(1)}&M_1=M_2=M_3,\\
f_1\mass{0}+f_2 \mass{\frac{M_2-M_3}{M_1-M_2}}+
f_3\mass{1}
&\mbox{\rm otherwise},
\end{cases}\n
\end{eqnarray}
where
$\mass{x}$ denotes the delta measure at $x$ and $(f_1,f_2,f_3)$ is given by
\eqref{f1f2f3}.
\end{lemma}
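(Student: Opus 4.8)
The plan is to solve the moment-matching system~\eqref{thm_ifonly} explicitly for $d=1,2,3$. By Theorem~\ref{thm_dminm} the hypothesis $\mfm{\mo{d}}\neq\emptyset$ already guarantees a distribution $\bar F$ which, written with $l=\lceil d/2\rceil+1$ atoms, solves~\eqref{thm_ifonly}; so it suffices to perform the elimination, check that it pins down the solution uniquely, and read off the closed form, rather than to prove existence from scratch. Concretely, for $d=1$ we are looking for a two-point distribution on $\{0,1\}$, for $d=2$ a distribution $f_1\mass{x_1}+f_2\mass{1}$ with one free atom, and for $d=3$ a distribution $f_1\mass{0}+f_2\mass{x_2}+f_3\mass{1}$ with one free atom. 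In each case~\eqref{thm_ifonly} becomes a square system of $d+1$ polynomial equations in $d+1$ unknowns.

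For $d=1$ the zeroth- and first-moment equations $f_1+f_2=1$, $f_2=M_1$ give $\bar F^{(1)}$ immediately. For $d=2$ I would subtract the zeroth-moment equation from the other two, obtaining $f_1(1-x_1)=1-M_1$ and $f_1(1-x_1^2)=1-M_2$; since $M_1<1$ in the non-degenerate case (treated below) the first quantity is nonzero, and dividing gives $1+x_1=(1-M_2)/(1-M_1)$, hence $x_1=(M_1-M_2)/(1-M_1)$, after which back-substitution yields $f_1=(1-M_1)^2/(1-2M_1+M_2)$ and $f_2=1-f_1$. For $d=3$ I would take the successive differences of the first-, second-, and third-moment equations to get $f_2x_2(1-x_2)=M_1-M_2$ and $f_2x_2^2(1-x_2)=M_2-M_3$; dividing gives $x_2=(M_2-M_3)/(M_1-M_2)$, then $f_2$, then $f_3=M_1-f_2x_2$ and $f_1=1-f_2-f_3$, which is exactly~\eqref{f1f2f3}. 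These are routine linear eliminations once the free atom is located, and each produces a unique triple, so by Theorem~\ref{thm_dminm} that triple must be $\bar F$.

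The step needing genuine care is the boundary of the moment space, where the representing measure has fewer than $l$ atoms and the divisions above degenerate to $0/0$. For $X\in[0,1]$ one has $X-X^2=X(1-X)\ge0$ and $X-2X^2+X^3=X(1-X)^2\ge0$, so $M_1=M_2$ forces $F$ onto $\{0,1\}$ and hence $M_3=M_1$. Consequently, under $\mfm{\mo{d}}\neq\emptyset$: for $d=2$ the denominator $1-2M_1+M_2=\E_F[(1-X)^2]$ vanishes only when $M_1=M_2=1$, in which case $\mfm{\mo{2}}=\{\mass{1}\}$; and for $d=3$ the denominators $M_1-M_2$, $M_2-M_3$, $M_1-2M_2+M_3=\E_F[X(1-X)^2]$ all vanish only when $M_1=M_2=M_3$, in which case $\mfm{\mo{3}}=\{\bar F^{(1)}\}$. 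Off these cases the same inequalities show the denominators are strictly positive, so the formulas are well defined.

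There is no deep ingredient here: the work is elementary linear algebra in the atom weights together with a finite case analysis driven by the sign conditions $X(1-X)\ge0$, $X(1-X)^2\ge0$. The main obstacle is therefore organizational --- verifying that every moment vector with $\mfm{\mo{d}}$ nonempty lies in exactly one of the listed branches and that no displayed denominator vanishes outside the degenerate ones.
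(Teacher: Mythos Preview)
Your proposal is correct and aligns with the paper's own treatment: the paper simply states that the lemma ``can be confirmed easily by substitution of $\bar{F}^{(d)}\,(d=1,2,3)$ into \eqref{thm_ifonly},'' relying on the uniqueness clause of Theorem~\ref{thm_dminm} just as you do. Your argument is the same elementary computation run forward (solve by elimination) rather than backward (plug in and verify), with an added case analysis for the boundary of the moment space; this is more detailed than what the paper supplies but not a genuinely different method.
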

This lemma can be confirmed easily
by substitution of $\bar{F}^{(d)}\,(d=1,2,3)$ into \eqref{thm_ifonly}.


\section{Proofs}\label{sec_proofs}
In this section we show Theorems \ref{thm_approach} and \ref{thm_dminm}.
\subsection{A Proof of Theorem \ref{thm_approach}}
Theorem \ref{thm_approach} is proved by a basic result on weak convergence
and L\'evy distance (see, e.g., \cite{lamperti}).
We say that a sequence of probability distributions $\{F_i\}$ converges weakly to $F$ if
$\lim_{i\to\infty}\E_{F_i}[u(X)]=\E_F[u(X)]$
for all bounded, continuous function $u(x)$.
Define the L\'evy distance $L(\cdot,\cdot)$ as
\begin{eqnarray}
L(F,G)=
 \inf\{h>0:\forall x,\,F(x-h)-h\le G(x)\le F(x+h)+h\}\n
\end{eqnarray}
where $F(\cdot)$ and $G(\cdot)$ denote cumulative distribution functions.
A weak convergence is equivalent to the convergence of the L\'evy distance, that is,
$\{F_i\}$ converges weakly to $F$ if and only if $\lim_{i\to\infty}L(F_i,F)=0$.
\begin{proposition}[{\cite[Theorem 7]{honda_colt}}]\label{thm_conti}
$\dmin(F,\mu)$ is continuous in $F\in\mf$ with respect to the L\'evy distance.
\end{proposition}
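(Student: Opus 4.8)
The plan is to run everything off the variational (dual) representation supplied by the earlier Proposition: for $\mu<1$,
\begin{eqnarray}
\dmin(F,\mu)=\sup_{0\le\nu\le\momss}\E_F[\log(1-(X-\mu)\nu)]=:\sup_{0\le\nu\le\momss}\Phi(F,\nu)\per\n
\end{eqnarray}
Although the earlier Proposition states this only for $\E_F[X]<\mu<1$, it in fact holds for every $F\in\mf$ with $\mu<1$: the map $\nu\mapsto\Phi(F,\nu)$ is concave (an expectation of concave functions of $\nu$), its derivative at $\nu=0$ equals $\mu-\E_F[X]$, and when $\E_F[X]\ge\mu$ this is nonpositive, so the supremum is attained at $\nu=0$ with value $0=\dmin(F,\mu)$. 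Since convergence in the L\'evy distance is equivalent to weak convergence (as recalled just above), it suffices to prove that $\dmin(\,\cdot\,,\mu)$ is both lower and upper semicontinuous with respect to weak convergence. I would treat only the case $\mu<1$; there the integrand is bounded above by $-\log(1-\mu)$ on $[0,1]\times[0,\momss]$, hence $\dmin(\,\cdot\,,\mu)$ is a finite, bounded function and no $+\infty$ values intervene (the case $\mu=1$ is degenerate, $\dmin$ taking only the values $0$ and $\infty$).

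The single analytic feature that governs the proof is that the integrand $u(x,\nu):=\log(1-(x-\mu)\nu)$ on the compact rectangle $[0,1]\times[0,\momss]$ is bounded above but unbounded below: one checks $1-(x-\mu)\nu\ge0$ throughout, with equality only at the corner $(x,\nu)=(1,\momss)$, where $u=-\infty$. Extending $u$ by $-\infty$ at that corner makes it a jointly continuous map into $[-\infty,-\log(1-\mu)]$, in particular jointly upper semicontinuous and bounded above. For lower semicontinuity I would fix $\nu<\momss$, where $u(\,\cdot\,,\nu)$ is bounded and continuous, so that $F\mapsto\Phi(F,\nu)$ is weakly continuous; then $\sup_{\nu<\momss}\Phi(\,\cdot\,,\nu)$ is a supremum of continuous functions and is lower semicontinuous. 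Concavity of $\nu\mapsto\Phi(F,\nu)$ guarantees, by the elementary fact that a concave function on a closed interval has the same supremum over the half-open interval, that dropping the endpoint does not lower the value; hence this supremum equals $\dmin(F,\mu)$, which is therefore lower semicontinuous.

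For upper semicontinuity I would approximate $u$ from above on the compact rectangle by the inf-convolutions $u_k(x,\nu)=\sup_{(y,\eta)}\{u(y,\eta)-k(|x-y|+|\nu-\eta|)\}$, which are bounded Lipschitz functions with $u_k\downarrow u$. Each $\Phi_k(F,\nu):=\E_F[u_k(X,\nu)]$ is jointly continuous in $(F,\nu)$, because $u_k$ is uniformly continuous in $\nu$ while weak convergence handles the $F$-variable; consequently $\Phi=\inf_k\Phi_k$ is jointly upper semicontinuous. A supremum of a jointly upper semicontinuous function over the compact parameter set $[0,\momss]$ is upper semicontinuous in $F$: given $F_n\to F$ weakly, choose near-optimal $\nu_n$, pass to a subsequence with $\nu_{n_j}\to\nu_*$, and obtain $\limsup_j\dmin(F_{n_j},\mu)\le\Phi(F,\nu_*)\le\dmin(F,\mu)$. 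Combining the two semicontinuities yields the continuity of $\dmin(\,\cdot\,,\mu)$.

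The main obstacle is exactly the singular corner $(1,\momss)$. Because the integrand is unbounded below there, the elementary weak-continuity of $F\mapsto\E_F[u]$ (valid for bounded continuous $u$) cannot be invoked directly, and one cannot expect the sections $\Phi(\,\cdot\,,\nu)$ to behave continuously up to the endpoint $\nu=\momss$. The asymmetry ``bounded above, unbounded below'' is what splits the argument: upper semicontinuity falls out of monotone approximation from above, whereas lower semicontinuity must be rescued by confining attention to the interior $\nu<\momss$ and using concavity so that the dangerous endpoint is never touched.
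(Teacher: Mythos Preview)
The paper does not actually prove this proposition: it is stated as a citation of \cite[Theorem 7]{honda_colt} and used as a black box in the proof of Theorem~\ref{thm_approach}. There is therefore no proof in the present paper to compare against.

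Your self-contained argument is sound. The extension of the dual formula to all $F$ with $\mu<1$ is correct (concavity in $\nu$ together with the sign of the derivative at $\nu=0$ gives $\sup_\nu\Phi(F,\nu)=0$ when $\E_F[X]\ge\mu$). For lower semicontinuity, the key point that $\sup_{\nu<\momss}\Phi(F,\nu)=\sup_{\nu\le\momss}\Phi(F,\nu)$ follows because a concave function $g$ on $[0,b]$ satisfies $\lim_{\nu\uparrow b}g(\nu)\ge g(b)$; hence dropping the endpoint never lowers the supremum, and on $[0,\momss)$ each integrand is bounded continuous so the sup of weakly continuous functions is l.s.c. For upper semicontinuity, the Moreau envelopes $u_k$ are bounded, $k$-Lipschitz, and decrease to $u$ because $u$ is upper semicontinuous on the compact rectangle (continuous into $[-\infty,-\log(1-\mu)]$); joint continuity of $\Phi_k$ follows from the uniform Lipschitz bound in $\nu$ plus weak continuity in $F$, and the compactness argument in the final step is the standard one (apply it to a subsequence realizing the $\limsup$). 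One small remark: continuity genuinely fails at $\mu=1$ (take $F_n\to\de(1)$ with $F_n\neq\de(1)$), so your restriction to $\mu<1$ is not merely a convenience but a necessary hypothesis, implicit in the cited statement.
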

Now we show Theorem \ref{thm_approach} by Prop.\,\ref{thm_conti}.
\begin{proof}[of Theorem \ref{thm_approach}]
From the continuity of $\dmin(F,\mu)$ in $F$, it suffices to show
for $\dmo{d}=\omo{d}{F}$ that
\begin{eqnarray}
\limsup_{d\to\infty}\sup_{G\in \mfm{\dmo{d}}}L(G,F)=0\per\label{conv_suffice}
\end{eqnarray}

Let $\{G_d\in\mfm{\dmo{d}}\}_{d=1,2,\cdots}$ be a sequence such that
\begin{eqnarray}
\limsup_{d\to\infty}\sup_{G\in \mfm{\dmo{d}}}L(F_d,F)
&=&
\limsup_{d\to\infty}L(G_d,F)=:\bar{L}\per\n
\end{eqnarray}
Since $\mf\supset \mfm{\dmo{d}}$ is compact with respect to the L\'evy distance,
there exist $\bG\in\mf$ and a
convergent subsequence
$\{G_{d_i}\}$ of $\{G_{d}\}$ such that 
\begin{align}
\lim_{i\to\infty}L(G_{d_i},F)&=\bar{L}\com\label{barl}\displaybreak[1]\\
\lim_{i\to\infty}L(G_{d_i},\bG)&=0\com\label{conv_tilde}
\end{align}
where \eqref{conv_tilde} means that $\{G_{d_i}\}$ converges weakly to $\bG$.
From the definition of weak convergence, for all natural numbers $m\in\bbbn$ it holds that
$\lim_{i\to\infty}\E_{G_{d_i}}[X^m]=\E_{\bG}[X^m]$.
On the other hand, $\E_{G_{d_i}}[X^m]=\E_F[X^m]$ for all $d_i\ge m$ from
$G_{d_i}\in \mfm{\dmo{d_i}}$.
Therefore we obtain for all $m\in\bbbn$ that
\begin{eqnarray}
\E_F[X^m]=\lim_{i\to\infty}\E_{G_{d_i}}[X^m]=\E_{\bG}[X^m]\per\n
\end{eqnarray}

Note that a sequence of moments $\{\E_F[X^m]\}$ has one-to-one correspondence to
a distribution $F$ for the case of bounded support.
Therefore $\bG=F$ and
we obtain $\bar{L}=0$ from \eqref{barl}.
\qed
\end{proof}

\subsection{A Proof of Theorem \ref{thm_dminm}}
Theorem \ref{thm_dminm} is proved by theories of Tchebysheff systems and moment spaces
(see Appendix), and the basic result on a saddle-point in the following.
For a function $\varphi(x,y):\mathcal{X}\times \mathcal{Y}\to [-\infty,+\infty]$,
a point $(\bar{x},\bar{y})\in \mathcal{X}\times \mathcal{Y}$ is called a saddle-point
if $\varphi(\bar{x},y)\le \varphi(\bar{x},\bar{y}) \le\varphi(x,\bar{y})$ for all $x\in \mathcal{X}$
and $y\in\mathcal{Y}$.
A necessary and sufficient condition for a saddle-point is
\begin{eqnarray}
\sup_{y\in\mathcal{Y}}\varphi(\bar{x},y)=
\inf_{x\in\mathcal{X}}\sup_{y\in\mathcal{Y}}\varphi(x,y)=
\sup_{y\in\mathcal{Y}}\inf_{x\in\mathcal{X}}\varphi(x,y)=
\inf_{x\in\mathcal{X}}\varphi(x,\bar{y})\per\n
\end{eqnarray}
\begin{proposition}[Minimax Theorem \cite{neumann}]
Let $\mathcal{X}$ and $\mathcal{Y}$ be a compact subset of a topological vector space
$\mathcal{V}$ and $\mathcal{U}$.
Let $\varphi(x,y): \mathcal{X}\times \mathcal{Y}\to [-\infty,+\infty]$ be a function such that
$\varphi(\cdot,y)$ is convex and lower-semicontinuous for any fixed $x$
and $\varphi(x,\cdot)$ is concave and upper-semicontinuous for any fixed $y$.
Then there exists a saddle point $(\bar{x},\bar{y})\in \mathcal{X}\times \mathcal{Y}$.
\end{proposition}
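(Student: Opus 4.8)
The plan is to recast saddle-point existence in terms of the equivalent condition stated just above the proposition, namely that a saddle point exists precisely when the two iterated extrema coincide and both are attained. Write $\alpha := \inf_{x}\sup_{y}\varphi(x,y)$ and $\beta := \sup_{y}\inf_{x}\varphi(x,y)$. The inequality $\beta\le\alpha$ (\emph{weak duality}) is immediate and uses none of the hypotheses, since for every $x_0,y_0$ one has $\inf_x\varphi(x,y_0)\le\varphi(x_0,y_0)\le\sup_y\varphi(x_0,y)$ and one takes $\sup_{y_0}$ then $\inf_{x_0}$. Hence the entire content lies in the reverse inequality $\alpha\le\beta$ together with attainment.

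Attainment I would dispose of first, as it is the routine part. The function $v(x):=\sup_{y}\varphi(x,y)$ is a supremum of functions lower-semicontinuous in $x$, hence is itself lower-semicontinuous; since $\mathcal{X}$ is compact, $v$ attains its infimum at some $\bar{x}$, so $v(\bar{x})=\alpha$. Dually, $w(y):=\inf_{x}\varphi(x,y)$ is an infimum of functions upper-semicontinuous in $y$, hence upper-semicontinuous, and attains its supremum at some $\bar{y}$ on the compact set $\mathcal{Y}$, so $w(\bar{y})=\beta$. Once the equality $\alpha=\beta$ is established, the common value is squeezed between $\sup_{y}\varphi(\bar{x},y)=\alpha$ and $\inf_{x}\varphi(x,\bar{y})=\beta$, which forces $\varphi(\bar{x},y)\le\varphi(\bar{x},\bar{y})\le\varphi(x,\bar{y})$ for all $x,y$; thus $(\bar{x},\bar{y})$ is the desired saddle point.

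The main obstacle is the \emph{minimax equality} $\alpha\le\beta$, and this is where the convex--concave structure must enter, since semicontinuity and compactness alone yield only weak duality. Here I would also assume, as is implicit in the statement once convexity of $\varphi$ in each variable is to have any force, that $\mathcal{X}$ and $\mathcal{Y}$ are convex. I would argue by contradiction: suppose $\beta<\alpha$ and fix a real $c$ with $\beta<c<\alpha$. Examine the sublevel sets $C_{y}:=\{x\in\mathcal{X}:\varphi(x,y)\le c\}$; each is closed by lower-semicontinuity of $\varphi(\cdot,y)$ and convex by convexity of $\varphi(\cdot,y)$. Because $c<\alpha$, no single $x$ satisfies $\varphi(x,y)\le c$ for all $y$, so $\bigcap_{y}C_{y}=\emptyset$, and compactness of $\mathcal{X}$ then forces a finite subfamily $C_{y_1},\dots,C_{y_n}$ with empty intersection.

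The crux is to convert this finite-intersection property into a contradiction with $c>\beta$, and this is precisely where concavity in $y$ is needed. Along convex combinations of $y_1,\dots,y_n$ the concavity of $\varphi(x,\cdot)$ places $\varphi$ above the corresponding convex combination of its values, which reduces the problem to the finite-dimensional (bilinear) minimax theorem over the simplex spanned by $\{y_j\}$. That finite case I would settle by a Knaster--Kuratowski--Mazurkiewicz / separating-hyperplane argument, equivalently by Brouwer's fixed-point theorem, producing a mixture $\bar{y}$ of the $y_j$ with $\inf_{x}\varphi(x,\bar{y})\ge c$, in contradiction with $c>\beta=\sup_{y}\inf_{x}\varphi(x,y)$. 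The delicate point throughout is this reduction step: compactness hands over only a finite family of ``bad'' points $y_j$, and it is the interplay of convexity in $x$ with concavity in $y$ --- not either property alone --- that upgrades the emptiness of a finite intersection into the minimax equality.
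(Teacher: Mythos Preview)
The paper does not prove this proposition; it is quoted as a classical result with a citation to \cite{neumann} and then applied directly in the proof of Theorem~\ref{thm_dminm}. There is therefore no in-paper proof to compare your proposal against.

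For what it is worth, your outline is the standard route to a Sion-type minimax theorem: weak duality is trivial, attainment comes from compactness plus the semicontinuity of the marginal value functions, and the minimax equality is obtained by a finite-intersection/compactness reduction followed by a KKM or separating-hyperplane argument on the simplex. Your observation that convexity of $\mathcal{X}$ and $\mathcal{Y}$ must be assumed (the paper's statement omits this) is correct and necessary for the concavity step to make sense, since otherwise convex combinations of the $y_j$ need not lie in $\mathcal{Y}$. The only place your sketch leaves real work is the ``finite case'' at the end, but that is exactly where the classical von~Neumann/Sion machinery sits, and you have identified the right tools for it.
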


In the proof of Theorem \ref{thm_dminm},
we regard a probability measure $F$ as an element of the family $\vs$ of positive measures
on $[0,1]$
to exploit the results in Appendix.
By letting $\moo{d}:=(1,M_1,\cdots,M_d)$ for $\mo{d}=(M_1,\cdots,M_d)$,
$\dminm{d}(\mo{d},\mu)$ is rewritten as
\begin{eqnarray}
\dmin(\mo{d},\mu)=\inf_{F\in\vs(\moo{d})}\max_{0\le\nu\le \mom}\E_F[\log(1-(X-\mu)\nu)]\com
\label{dminm_henkei}
\end{eqnarray}
where $\vs(\moo{d})$ is the set of positive measures with $0,1,\cdots,d$-th moments equal to $\moo{d}$,
written in \eqref{vs_def}.

\begin{proof}[of Theorem \ref{thm_dminm}]
Let $\mm_{d+1}$ be the moment space with respect to the system $(1,x,\cdots,x^d)$.
It is easily checked that the solutions of \eqref{thm_ifonly} have one-to-one correspondence to
the representations of $\moo{d}\in\mm_{d+1}$ with index at most $d/2$ or
to the upper principal representation.

First consider the trivial case that $\moo{d}$ is a boundary point of $\mm_{d+1}$.
For this case, the proof is straightforward since
$\vs(\moo{d})$ has a single element and its index is at most $d/2$
from Prop.\,\ref{boundary}.

Now we consider the remaining case that $\moo{d}$ is an interior point of $\mm_{d+1}$.
For this case, the upper principal representation of $\mo{d}$
is the unique solution of \eqref{thm_ifonly}
since existence of a representation with index at most $d/2$
implies that $\moo{d}$ is a boundary point of $\mm_{d+1}$ from Prop.\,\ref{boundary}.
In the following, we complete the proof by showing that
the upper principal representation of $\moo{d}$ is the optimal solution $\bar{F}$ in \eqref{thm_uniq}.

Consider applying the minimax theorem to
\eqref{dminm_henkei}.
First, $\mf\supset \vs(\moo{d})$ is compact with respect to the L\'evy distance
and $\E_F[\log(1-(X-\mu)\nu)]$ is linear in $F\in \vs$
for any fixed $\nu$.
Next, $\E_F[\log (1-(X-\mu)\nu)]$ is upper-semicontinuous and concave
in $\nu$ for any fixed $F$.
Then we obtain from the the minimax theorem
that there exists $\bar{\nu}$ satisfying
\begin{eqnarray}
\dmin(\mo{d},\mu)
&=&
\inf_{F\in \vs(\moo{d})}\E_F[\log(1-(X-\mu)\bar{\nu})]\per\n
\end{eqnarray}

Now we show $\bar{\nu}<\momss$ by contradiction.
Assume $\bar{\nu}=\momss$.
From Prop.\,\ref{interior_involve} with $x^*:=1$,
there exists $F'\in\vs(\moo{d})$ such that $F'(\{1\})>0$.
Therefore, from $\log 0=-\infty$,
\begin{eqnarray}
\inf_{F\in \vs(\mo{d})}\E_F[\log(1-(X-\mu)\bar{\nu})]
\le
\E_{F'}[\log(1-(X-\mu)\bar{\nu})]=-\infty\per\n
\end{eqnarray}
It contradicts the positivity of $\dminm{d}$ and $\bar{\nu}<\momss$ is obtained.

From Lemma \ref{t_log} with $\ta:=1+\mu\bar{\nu}$ and $\tb:=\bar{\nu}$, 
$(1,x,\cdots,x^d,-\log(1-(x-\mu)\bar{\nu}))$ is a $T$-system on $[0,1]$
since $\ta/\tb=1/\bar{\nu}+\mu>1$.
Therefore, from Prop.\,\ref{karlin_main}, we obtain
\begin{eqnarray}
\inf_{F\in \vs(\mo{d})}\E_F[\log(1-(X-\mu)\bar{\nu})]
&=&
-\sup_{F\in\vs(\mo{d})}\E_F[-\log(1-(X-\mu)\bar{\nu})]\nn
&=&
-\E_{\bar{F}}[-\log(1-(X-\mu)\bar{\nu})]\nn
&=&
\E_{\bar{F}}[\log(1-(X-\mu)\bar{\nu})]=\dmin(\bF,\mu)\com\n
\end{eqnarray}
where $\bar{F}$ corresponds to the upper principal representation of $\moo{d}$.
\qed
\end{proof}

\section{Improvement of DMED-M Policy}
\label{section_gap}
In DMED-M, $\dmin(F,\mu)$
is bounded from below by $\dminm{d}(\omo{d}{F},\mu)$.
When the gap between $\dminm{d}$ and $\dmin$ is small, DMED-M behaves like
the asymptotically optimal policy, DMED.
%
In this section, we propose DMED-MM policy which is obtained
by a slight modification to DMED-M.
We discuss that DMED-MM works successfully for the case
where the gap between $\dminm{d}$ and $\dmin$ is large.

Define a function $\dmint{d}(F,\mu)$ by
\begin{eqnarray}
\dmint{d}(F,\mu)=\begin{cases}
\dmin(F,\mu)&\E_F\left[\frac{1}{1-X}\right]\le \frac{1}{1-\mu},\\
\dminm{d}(\omo{d}{F},\mu) &\mathrm{otherwise},
\end{cases}\n
\end{eqnarray}
where
recall that
$\dmin(F,\mu)=\E_F[\log(1-X)]-\log(1-\mu)$ for the first case.
{\it DMED-MM (DMED-M Mixed)} policy is obtained by replacing
$\dminm{d}(\omo{d}{\fhatn{i}{n}},\allowbreak\mu)$
in DMED-M by $\dmint{d}(\fhatn{i}{n},\mu)$.
Then the criterion for choosing an arm is the same as DMED
for the case $\E_{\fhatn{i}{n}}[1/(1-X)]\le 1/(1-\mu)$ and
the same as DMED-M otherwise.

In the first place, $\dminm{d}(\omo{d}{\fhatn{i}{n}},\mu)$ in DMED-M is easy to compute since
the empirical moments
$\omo{d}{\fhatn{i}{n}}$
can be obtained in constant time from the sum
$\sum_{t=1}^{T_i(n)} X_{i,t}^m$ where $X_{i,t}$ denotes the $t$-th
reward from arm $i$.
On the other hand, $\dmin(\fhatn{i}{n},\mu)\allowbreak =\max_{\nu}\E_{\fhatn{i}{n}}[1-(X-\mu)\nu]$
in DMED requires
the computation of the sum
$\sum_{t} \log(1-(X_{i,t}-\mu)\nu)$
where $\mu$ and $\nu$
generally take various values.
In this viewpoint, the computation of $\dmint{d}(\fhatn{i}{n},\mu)$ is practical
since it is obtained from the sums $\sum_t X_{i,t}^m$,\,
$\sum_t 1/(1-X_{i,t})$ and $\sum_t \log(1-X_{i,t})$.

Now consider the maximum gap between $\dminm{d}(\omo{d}{F},\mu)$ and $\dmin(F,\mu)$
among distributions $F$ with moment $\mo{d}$.
\begin{lemma}\label{thm_sup}
The supremum in
\begin{eqnarray}
\!\!\!\!\!\!\!\!\!\!
\sup_{F\in \mfm{\mo{d}}}\left(\dmin(F,\mu)-\dminm{d}(\mo{d},\mu)\right)
=
\sup_{F\in \mfm{\mo{d}}}\dmin(F,\mu)-\dminm{d}(\mo{d},\mu)
\label{dmin_sup}
\end{eqnarray}
is attained by the unique solution of
\begin{eqnarray}
\begin{cases}
\sum_{i=1}^{\lceil (d+1)/2\rceil} f_i x_i^m=M_m\;(m=0,\cdots,d),&\mbox{\rm $d$ is odd},\\
\sum_{i=1}^{\lceil (d+1)/2\rceil} f_i x_i^m=M_m\;(m=0,\cdots,d),\, x_{1}=0, &\mbox{\rm $d$ is even}.
\end{cases}\label{lem_ifonly}
\end{eqnarray}
\end{lemma}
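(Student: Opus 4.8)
The equality of the two sides of \eqref{dmin_sup} is immediate: $\dminm{d}(\mo{d},\mu)=\inf_{F\in\mfm{\mo{d}}}\dmin(F,\mu)$ is constant on $\mfm{\mo{d}}$ and hence factors out of the supremum, so it only remains to locate the maximizer of $\sup_{F\in\mfm{\mo{d}}}\dmin(F,\mu)$. The plan is to follow the proof of Theorem \ref{thm_dminm} step for step, replacing the \emph{upper} principal representation of $\moo{d}$ by the \emph{lower} one. I would first dispose of the case where $\moo{d}$ is a boundary point of $\mm_{d+1}$: then $\vs(\moo{d})$ consists of a single measure by Prop.\,\ref{boundary}, which is trivially both the maximizer and, exactly as in Theorem \ref{thm_dminm}, the unique solution of \eqref{lem_ifonly}.

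Assume now $\moo{d}$ is interior to $\mm_{d+1}$, and recall $\dmin(F,\mu)=\max_{0\le\nu\le\momss}\E_F[\log(1-(X-\mu)\nu)]$. In contrast to the proof of Theorem \ref{thm_dminm}, no minimax theorem is needed here, since an iterated supremum may be evaluated in either order:
\begin{eqnarray}
\sup_{F\in\vs(\moo{d})}\dmin(F,\mu)&=&\sup_{0\le\nu\le\momss}\ \sup_{F\in\vs(\moo{d})}\E_F[\log(1-(X-\mu)\nu)]\per\n
\end{eqnarray}
The next step restricts the outer supremum to $\nu<\momss$. Splitting $\E_F[\log(1-(X-\mu)\nu)]$ over $\{X\le\mu\}$ and $\{X>\mu\}$, the first part is nondecreasing in $\nu$ and bounded above by $-\log(1-\mu)$ while the second is nonincreasing, so by monotone convergence $\E_F[\log(1-(X-\mu)\nu)]\to\E_F[\log((1-X)/(1-\mu))]$ as $\nu\uparrow\momss$ for every $F\in\vs(\moo{d})$; hence the value at $\nu=\momss$ never exceeds the supremum over $\nu<\momss$, and the restriction costs nothing, i.e. $\dmin(F,\mu)=\sup_{0\le\nu<\momss}\E_F[\log(1-(X-\mu)\nu)]$.

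For each fixed $\nu<\momss$, Lemma \ref{t_log} with $\ta:=1+\mu\nu$ and $\tb:=\nu$ (so that $\ta/\tb=1/\nu+\mu>1$) shows that $(1,x,\cdots,x^d,-\log(1-(x-\mu)\nu))$ is a $T$-system on $[0,1]$; applying Prop.\,\ref{karlin_main} in its form for the minimizing principal representation — equivalently, applying it after negating the last function, as in the proof of Theorem \ref{thm_dminm} but with the two principal representations interchanged — then gives $\sup_{F\in\vs(\moo{d})}\E_F[\log(1-(X-\mu)\nu)]=\E_{\uF}[\log(1-(X-\mu)\nu)]$, where $\uF$ is the lower principal representation of $\moo{d}$ and, crucially, does not depend on $\nu$. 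Since moreover $\sup_{0\le\nu<\momss}\E_{\uF}[\log(1-(X-\mu)\nu)]=\dmin(\uF,\mu)$ (the left limit at $\momss$ adding nothing to the maximum over the closed interval),
\begin{eqnarray}
\sup_{F\in\vs(\moo{d})}\dmin(F,\mu)
&=&\sup_{0\le\nu<\momss}\E_{\uF}[\log(1-(X-\mu)\nu)]\nn
&=&\dmin(\uF,\mu)\le\sup_{F\in\vs(\moo{d})}\dmin(F,\mu)\com\n
\end{eqnarray}
so equality holds throughout and the supremum is attained at $\uF$. That $\uF$ is the unique solution of \eqref{lem_ifonly} then follows exactly as in the proof of Theorem \ref{thm_dminm}, using Prop.\,\ref{boundary} to rule out, for the interior point $\moo{d}$, any representation of index at most $d/2$.

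The only step I expect to require real care is the endpoint $\nu=\momss$: there $\ta/\tb=1$, Lemma \ref{t_log} no longer supplies a $T$-system, and the integrand may equal $\log 0$, so the Tchebysheff argument breaks down; the monotone-convergence splitting at $X=\mu$ that bypasses this endpoint is therefore the delicate point, everything else being the mirror image of the argument already carried out for Theorem \ref{thm_dminm}.
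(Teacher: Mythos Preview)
Your proposal is correct and follows exactly the route the paper indicates, namely mirroring the proof of Theorem~\ref{thm_dminm} with the lower principal representation in place of the upper one. You have in fact supplied more detail than the paper's two-sentence sketch: your observation that $\sup$--$\sup$ makes the minimax theorem unnecessary, and your monotone-convergence treatment of the endpoint $\nu=(1-\mu)^{-1}$ (where the contradiction argument via Prop.\,\ref{interior_involve} used for the infimum would no longer apply), are precisely the modifications a literal ``parallel'' proof requires; the only cosmetic omission is that Lemma~\ref{t_log} needs $\nu>0$, but at $\nu=0$ the integrand vanishes identically.
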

The proof of the lemma is parallel to that of Theorem \ref{thm_dminm}
which considers the infimum of $\dmin(F,\mu)$.
In Theorem \ref{thm_dminm},
we saw that the upper principal representation $\bF$ of $\mo{d}$ attains the infimum
from Prop.\,\ref{karlin_main}.
Similarly, we can show that the lower principal representation $\uF$ of $\mo{d}$ attains the supremum
in \eqref{dmin_sup}.

Note that we can obtain an explicit expression of \eqref{dmin_sup} for small $d$
in the same way as Theorem \ref{thm_d123}.
However, such an expression is a complicated function on $\mo{d}$ as Theorem \ref{thm_d123}
and it is not useful as an evaluation of the gap, since we cannot know the value of the expression
until we substitute the specific value of $\mo{d}$.

Lemma \ref{thm_sup} is useful when we consider the performance of DMED-MM.
Compare the solution $\uF$ of \eqref{lem_ifonly}
to the upper principal representation $\bF$ in \eqref{thm_ifonly}.
For odd $d$, $\uF$ is supported by fewer points which generally contain neither $0$ nor $1$.
For even $d$, $\uF$ is supported by the same number of points which contain $0$ instead of $1$.
In any case, we can say qualitatively that $\uF$, distribution such that
DMED-M behaves badly (i.e., gap between $\dmin$ and $\dminm{d}$ is large),
has small weight around $1$.

Note that such a distribution often satisfies $\E_F[1/(1-X)]\le 1/(1-\mu)$
since $\E_F[1/(1-X)]$ is controlled mainly by the weight around $1$.
In fact, we can show from Prop.\,\ref{karlin_main} that $\min_{F\in\mfm{\mo{d}}}\E_F[1/(1-X)]$ is attained
by the lower principal representation $\uF$,
which also attains the supremum in \eqref{dmin_sup}.



Now we summarize the above argument:
(1) DMED-M behaves most differently from DMED for $\uF$
among distributions with the moment $\mo{d}$.
(2) Among these distributions $F$, $\uF$ is also the distribution minimizing
$\E_F[1/(1-X)]$,
although the minimum value is not always smaller than $1/(1-\mu)$.
(3) If $\E_F[1/(1-X)]\le 1/(1-\mu)$, DMED-MM behaves in the same way as DMED
(otherwise it does in the same way as DMED-M).
In this sense, the worst case gap between DMED-MM and DMED
is sometimes smaller than that between DMED-M and DMED.

\section{Experiments}\label{sect_sim}
In this section we show some numerical results on DMED-MM and the function $\dminm{d}$.

First, we compare the performance of $1,2,3$-th degree DMED-MM
and DMED in Fig.\,\ref{fig1}.
\begin{figure}[tb]
 \begin{center}
  \includegraphics[bb=135 220 490 630,clip,angle=270,width=69mm]{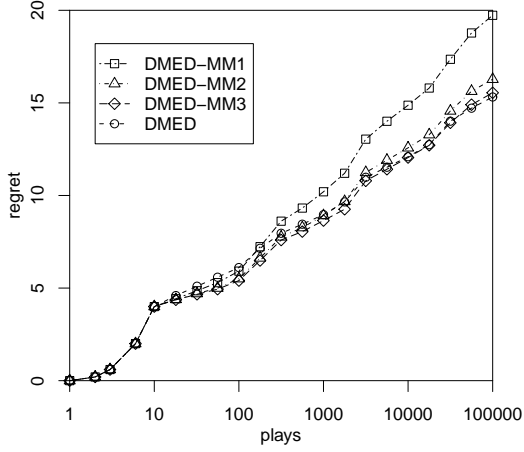}
 \end{center}
 \caption{Empirical regrets of $1,2,3$-th degree DMED-MM and DMED for beta distributions.
 Each plot is an average over 1000 different runs.}
 \label{fig1}
\end{figure}
Each plot is an average over 1000 different runs
and we used $5$ arms with beta distributions $\mathrm{Be}(\alpha,\beta)$.
Note that beta distribution covers various forms of distributions on $[0,1]$.
The parameters of the arms are
$(\alpha,\beta)=(9,1),(0.7,0.3),\allowbreak(5,5),(0.3,0.7),(1,9)$
with expectations $\mu=0.9,\,0.7,\,0.5,\,0.3,\,0.1$. 
The vertical axis denotes the regret  $\sum_{i:\mu_i<\mu^*}(\mu^*-\mu_i)T_i(n)$, which is
the loss due to choosing suboptimal arms.
We see from the figure that the performance of DMED-MM approaches DMED
as the degree increases.

Next, we show values of $\dminm{d}$ and whether $\E_F[1/(1-X)]\le 1/(1-\mu)$ or not
for various distributions in Table \ref{table1}.
Recall that DMED-MM works the same as DMED for the case $\E_F[1/(1-X)]\le 1/(1-\mu)$.
\begin{table}[tb] 
\caption{Values of $\dminm{d}(\omo{d}{F},\mu)$ and $\dmin(F,\mu)$ for beta distributions.
}
\label{table1}
\begin{center}
{\normalsize
{\renewcommand\arraystretch{1.2}
\begin{tabular}{|c|c|c||c|c|c|c|c|}
\hline
$F$&$\,\E_F[X]\,$&$\mu$&$\dminm{1}$&$\dminm{2}$&$\dminm{3}$&$\dmin$
&$\,\E_F\big[\frac{1}{1-X}\big]\le\frac{1}{1-\mu}\!\phantom{\Big|}$\\
\hline
$\mathrm{Be}(2,2)$&0.5&0.6&0.0204&0.0703&0.0843&0.0984&False\\
\hline
$\mathrm{Be}(0.5,0.5)$&0.5&0.6&0.0204&0.0366&0.0400&0.0408&False\\
\hline
$\mathrm{Be}(1,3)$&0.25&0.6&0.253&0.459&0.522&0.583&True\\
\hline
$\mathrm{Be}(0.25,0.75)$&0.25&0.6&0.253&0.348&0.391&0.431&False\\
\hline
$\mathrm{Be}(2,2)/2$&0.25&0.3&0.00617&0.0373&0.0490&0.0576&True\\
\hline
\,$\mathrm{Be}(0.5,0.5)/2\,$&0.25&\,0.3\,&\,0.00617\,&\,0.0239\,&\,0.0337\,&\,0.0401\,&True\\
\hline
\end{tabular}
}
}
\end{center}
\end{table}
Distribution $\mathrm{Be}(\alpha,\beta)/2$ denotes 
the distribution of $X/2$ for random variable $X$ with distribution $\mathrm{Be}(\alpha,\beta)$.
It corresponds to the case that the upper bound of the support of distributions
is unknown and assumed conservatively as $2$ instead of $1$.
For this case, a reward $X$ is passed as $X/2$ to a policy for distributions on $[0,1]$.
We see from the figure that
$\dminm{d}$ bounds $\dmin$ from below accurately when $\E_F[1/(1-X)]\le 1/(1-\mu)$ is false,
as discussed in the previous section.
Overall, the gap between $\dminm{1}$ and $\dminm{2}$ seems to be very large
and it seems to be necessary to use at least the second moment (i.e., variance)
to achieve a smaller regret.

\section{Conclusion}
\label{section-remarks}
In this paper we proposed DMED-M policy which
is computed by the first $d$ empirical moments of the arms.
The regret bound of DMED-M approaches that of DMED, which is asymptotically optimal,
as $d$ increases.
The computation involved in DMED-M is represented in an explicit form for small $d$.
We also proposed DMED-MM policy, which sometimes
improves the worst case performance of DMED-M.

An open problem is whether the asymptotic bound of DMED-M is the best
for all policies which only consider the empirical moments.
We may be able to prove the optimality of DMED-M in this sense under some regularity conditions.



\bibliographystyle{splncs}
\bibliography{bunken}

\appendix
\section{Tchebycheff Systems and Moment Spaces}
In this appendix we summarize results on Tchebycheff systems and moment spaces
needed for the proof of Theorem \ref{thm_dminm}.
All functions and measures are defined on $[a,b]\,(a<b)$ in this appendix
whereas they are on $[0,1]$ elsewhere.
For any set of points $\{x_1,\cdots,x_l\}$, we always assume
$a\le x_1<x_2<\cdots<x_l\le b$.

 \begin{definition}
Let $u_0(x),\cdots,u_d(x)$ denote continuous real-valued functions on $[a,b]$.
These functions are called a Tchebycheff system (or T-system) if
determinants
\begin{eqnarray}
\mathrm{det}
\left(\begin{array}{cccc}
u_0(x_0)& u_1(x_0) &\cdots&u_d(x_d)\\
u_0(x_1)& u_1(x_1) &\cdots&u_d(x_d)\\
\vdots& \vdots &&\vdots\\
u_0(x_d)& u_1(x_d) &\cdots&u_d(x_d)\\
\end{array}\right)\label{determinant}
\end{eqnarray}
are
positive for all $\{x_0,\cdots,x_d\}$.
\end{definition}

A typical $T$-system is $u_i(x)=x^i \,(i=0,1,\cdots,d)$, where \eqref{determinant}
is represented as the Vandermonde determinant
\begin{eqnarray}
\mathrm{det}
\left(\begin{array}{cccc}
1&x_0&\cdots&x_0^d\\
1&x_1&\cdots&x_1^d\\
\vdots& \vdots &&\vdots\\
1&x_d&\cdots&x_d^d\\
\end{array}\right)
=\prod_{1\le i< j\le d}(x_j-x_i)>0\per \n
\end{eqnarray}

Let $Z(u)$ of a function $u(x)$ denote the number of distinct points $x\in[a,b]$ such that
$u(x)=0$.
Then $T$-systems are discriminated by the following proposition.
\begin{proposition}[{\cite[Chap. I, Theorem 4.1]{karlin}}]\label{tobeT}
If a system $\{u_i\}_{i=0}^d$ of continuous functions on $[a,b]$ satisfies $Z(u)\le d$
for all
\begin{eqnarray}
u(x)=\sum_{i=0}^d a_i u_i(x),\,\;\{a_i\}\in \bbbr^{d+1}\setminus \{0^{d+1}\}\com \n
\end{eqnarray}
 then
$(u_0,u_1,\cdots,u_{d-1},u_d)$ or $(u_0,u_1,\cdots,u_{d-1},-u_d)$ is a $T$-system.
\end{proposition}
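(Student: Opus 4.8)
The plan is to prove the statement directly by analyzing the sign of the generalized Vandermonde determinant $D(x_0,\cdots,x_d)=\det\big(u_j(x_i)\big)_{i,j=0}^d$ appearing in \eqref{determinant}, in three steps: first show $D$ never vanishes on strictly ordered tuples, then deduce that $D$ has constant sign via a connectedness argument, and finally dispose of the sign by possibly replacing $u_d$ with $-u_d$.

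First I would fix an ordered tuple $a\le x_0<x_1<\cdots<x_d\le b$ and argue by contradiction that $D(x_0,\cdots,x_d)\neq 0$. If it were zero, the columns of the matrix would be linearly dependent, yielding a vector $\{a_j\}\in\bbbr^{d+1}\setminus\{0^{d+1}\}$ with $\sum_{j=0}^d a_j u_j(x_i)=0$ for all $i=0,\cdots,d$; the function $u(x)=\sum_{j=0}^d a_j u_j(x)$ would then vanish at the $d+1$ distinct points $x_0,\cdots,x_d$, so $Z(u)\ge d+1$, contradicting the hypothesis $Z(u)\le d$. Here I use that $Z$ counts distinct zeros, which is precisely what makes the argument work, and that $u\not\equiv 0$, which is automatic since $u\equiv 0$ would give $Z(u)=\infty>d$.

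Next I would observe that the set $S=\{(x_0,\cdots,x_d): a\le x_0<x_1<\cdots<x_d\le b\}$ is convex: for two such tuples $(x_i)$ and $(y_i)$ and $t\in[0,1]$, the coordinates $tx_i+(1-t)y_i$ remain strictly increasing in $i$ and stay in $[a,b]$. Hence $S$ is connected, and since each $u_j$ is continuous, $D$ is continuous and nowhere zero on $S$, so it has constant sign there. If $D>0$ on $S$ then $(u_0,\cdots,u_d)$ is a $T$-system by definition; if $D<0$ on $S$, replacing $u_d$ by $-u_d$ negates the last column of the matrix and hence $D$, so the corresponding determinant is positive throughout $S$ and $(u_0,\cdots,u_{d-1},-u_d)$ is a $T$-system. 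I do not anticipate a serious obstacle; the only step requiring a moment's care is the convexity (hence connectedness) of $S$, on which the constant-sign conclusion — and thus the whole argument — rests.
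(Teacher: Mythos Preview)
Your argument is correct. Note, however, that the paper does not supply its own proof of this proposition: it is quoted as \cite[Chap.~I, Theorem~4.1]{karlin} and used as a black box. Your three-step argument (nonvanishing of the determinant via the zero-count hypothesis, constant sign by continuity on the connected simplex of ordered tuples, and sign correction by negating $u_d$) is precisely the classical proof one finds in Karlin--Studden, so there is nothing to compare beyond observing that you have reproduced the standard argument faithfully.
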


\begin{lemma}\label{t_log}
For any $\ta$ and $\tb>0$ satisfying $b<\ta/\tb$, $(1,x,\cdots,x^d,-\log(\ta-\tb x))$
is a $T$-system on $[a,b]$.
\end{lemma}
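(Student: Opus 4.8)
The plan is to verify the determinant condition of the definition directly, extracting the crucial sign from a single explicit derivative. Set $g(x):=-\log(\ta-\tb x)$. Since $\tb>0$ and $b<\ta/\tb$, we have $\ta-\tb x\ge\ta-\tb b>0$ for every $x\in[a,b]$, so $g$ is real-analytic on a neighbourhood of $[a,b]$, and an easy induction gives
\[
g^{(k)}(x)=\frac{(k-1)!\,\tb^{\,k}}{(\ta-\tb x)^{k}}\qquad(k\ge 1).
\]
In particular $g^{(d+1)}(x)>0$ throughout $[a,b]$, and this positivity is precisely what will force the correct sign, namely that the member adjoined to the monomials is $-\log(\ta-\tb x)$ rather than $+\log(\ta-\tb x)$.

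Next I would fix arbitrary points $a\le x_0<x_1<\cdots<x_{d+1}\le b$ and evaluate the determinant appearing in \eqref{determinant} for the system $u_i(x)=x^{i}$ $(0\le i\le d)$, $u_{d+1}(x)=g(x)$. This matrix is the Vandermonde matrix on $x_0,\dots,x_{d+1}$ with its last column (the column of $x^{d+1}$) replaced by the column of values $g(x_j)$. Solving the interpolation system $\sum_{i=0}^{d+1}c_i x_j^{i}=g(x_j)$ by Cramer's rule, and recalling that the leading coefficient $c_{d+1}$ of the interpolating polynomial equals the divided difference $g[x_0,\dots,x_{d+1}]$, one obtains that the determinant equals $g[x_0,\dots,x_{d+1}]\prod_{0\le i<j\le d+1}(x_j-x_i)$. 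The Vandermonde product is positive, and by the mean value theorem for divided differences (valid because $g\in C^{\infty}[a,b]$) we have $g[x_0,\dots,x_{d+1}]=g^{(d+1)}(\xi)/(d+1)!>0$ for some $\xi\in(x_0,x_{d+1})\subseteq[a,b]$. Hence the determinant is positive for every admissible configuration of points, which is exactly the assertion that $(1,x,\dots,x^{d},-\log(\ta-\tb x))$ is a $T$-system on $[a,b]$.

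A variant avoiding divided differences goes through Proposition~\ref{tobeT}. For a nontrivial combination $u(x)=P(x)+c\,g(x)$ with $\deg P\le d$: if $u'\equiv 0$ then $u$ is a nonzero constant with no zeros in $[a,b]$; otherwise $(\ta-\tb x)u'(x)=(\ta-\tb x)P'(x)+c\tb$ is a polynomial of degree at most $d$ that is not identically zero, and $\ta-\tb x$ has no zero on $[a,b]$, so $u'$ has at most $d$ zeros in $[a,b]$ and, by Rolle's theorem, $u$ has at most $d+1$. Proposition~\ref{tobeT} then yields that one of $(1,\dots,x^{d},\pm\log(\ta-\tb x))$ is a $T$-system, and the correct sign is fixed exactly as above from $g^{(d+1)}>0$. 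Either way the lemma is essentially routine; the only point that genuinely needs care is which sign of the adjoined logarithm completes the system, and that is settled by the sign of $g^{(d+1)}$ on $[a,b]$.
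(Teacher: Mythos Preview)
Your proof is correct and, in fact, cleaner than the paper's. The paper proceeds via Proposition~\ref{tobeT}: it writes $u'(x)$ over the common denominator $\ta-\tb x$, notes the numerator is a polynomial of degree at most $d$, so $u$ has at most $d+1$ zeros on an extended interval $[a,b']$ with $b'<\ta/\tb$; this matches your ``variant'' exactly. To decide the sign, however, the paper does not compute any derivative: it expands the determinant along the last column, sends $x_{d+1}=b'\uparrow\ta/\tb$, and reads the sign off the dominant term containing $\log(\ta-\tb x_{d+1})\to-\infty$.

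Your main route bypasses Proposition~\ref{tobeT} entirely. Identifying the determinant with $g[x_0,\dots,x_{d+1}]\cdot\prod_{i<j}(x_j-x_i)$ and invoking the mean value theorem for divided differences is a single stroke that simultaneously gives positivity and the correct sign, with no need to enlarge the interval or run a limit. The paper's argument, by contrast, avoids quoting the divided-difference mean value theorem but pays for this with the somewhat ad hoc $b'\uparrow\ta/\tb$ step. Either method is short; yours is the more structural one and makes transparent why the minus sign on the logarithm is forced, namely $g^{(d+1)}>0$.
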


\begin{proof}
Let $b'\in (b,\,\ta/\tb)$ be sufficiently close to $\ta/\tb$ and
consider function
\begin{eqnarray}
u(x)=\sum_{m=0}^d a_m x^m -a_{d+1}\log(\ta-\tb x) \n
\end{eqnarray}
on $x\in[a,b']$.
Since the derivative of $u(x)$ is written as
\begin{eqnarray}
\bibun{u(x)}{x}=\frac{(\ta-\tb x)\sum_{m=1}^d a_m x^{m-1}+a_{d+1}q}
{\ta-\tb x}\com \n
\end{eqnarray}
$u(x)$ has at most $d$ extreme points in $[a,b']$.
Therefore $Z(u)\le d+1$ and
$(1,x,\cdots,x^d,\allowbreak \log(\ta-\tb x))$ or
$(1,x,\cdots,x^d,-\log(\ta-\tb x))$ is a $T$-system on $[a,b']$
from Prop.\,\ref{tobeT}.

The determinant \eqref{determinant} for the system
$(1,x,\cdots,x^d,\allowbreak \log(\ta-\tb x))$ is written as
\begin{eqnarray}
\lefteqn{
\mathrm{det}
\left(\begin{array}{lllll}
1\:&x_0\:&\cdots&x_0^d\:&\log(\ta-\tb x_0)\\
1\:&x_1\:&\cdots&x_1^d\:&\log(\ta-\tb x_1)\\
\vdots& \vdots &&\vdots&\qquad\vdots\\
1\:&x_{d+1}\:&\cdots&x_{d+1}^d\:&\log(\ta-\tb x_{d+1})\\
\end{array}\right)
}\nn
&=&
\sum_{m=0}^{d+1} (-1)^{d+m+1}\left(\prod_{0\le i< j\le d+1:i,j\neq m}(x_j-x_i)\right) \log(\ta-\tb x_m)
\per\n
\end{eqnarray}
For the case that $x_{d+1}=b'$ with $b'\uparrow \ta/\tb$,
$\log(\ta-\tb x_{d+1})$ goes to $-\infty$
and the sign of the determinant is controlled by the term involving $\log(\ta-\tb x_{d+1})$,
which is written as
\begin{eqnarray}
(-1)^{2m+2}\left(\prod_{0\le i< j\le d+1:i,j\neq m}(x_j-x_i)\right)\log (\ta-\tb x_{d+1})<0\per\n
\end{eqnarray}
Then, $(1,x,\cdots,x^d,\allowbreak \log(\ta-\tb x))$ cannot be a $T$-system
on $[a,b']$ for $b'$ sufficiently close to $\ta/\tb$ and therefore
$(1,x,\cdots,x^d,\allowbreak -\log(\ta-\tb x))$ has to be a $T$-system on $[a,b']$.
From the definition of $T$-system, it also is a $T$-system on $[a,b]\subset [a,b']$.
\qed
\end{proof}

Let $\vs$ be the family of positive measures on $[a,b]$
and define a subset $\vs(\moap{d})$ of $\vs$
for a vector $\moap{d}=(\moapo_0,\moapo_1,\cdots,\moapo_d)$ as
\begin{eqnarray}
\vs(\moap{d})=\left\{\si\in \vs: \forall m\in\{0,1,\cdots,d\},\,\int_a^b x^m \rd \si(x)=\moapo_m \right\}\per
\label{vs_def}
\end{eqnarray}
The notion of {\it moment spaces} is essential to examine properties of $T$-systems.
\begin{definition}
The moment space $\mm_{d+1}$ with respect to the $T$-system $\{u_i\}$ is given by
\begin{eqnarray}
\lefteqn{
\mm_{d+1}
}\nn
&\equiv& \left\{\left(\int_a^b u_0(x)\rd \sigma(x),\int_a^b u_1(x)\rd \sigma(x),\cdots, \int_a^b u_d(x)\rd \sigma(x)
\right)\in \bbbr^{d+1}
: \sigma\in \vs\right\}\per\n
\end{eqnarray}
\end{definition}

Consider the case that $\moap{d}\in\mm_{d+1}$ satisfies
\begin{eqnarray}
M_m=\sum_{i=1}^l f_i u_m(x_i)\quad (m=0,\cdots,d) \label{rep}
\end{eqnarray}
with $x_1,\cdots,x_l\in [a,b]$ and $f_1,\cdots,f_l>0$ for any finite $l$.
We call such an expression {\it representation} of $\moap{d}$.
A representation of $\moap{d}$ corresponds uniquely to the measure
\begin{eqnarray}
\si=\sum_{i=1}^l f_i\de(x_i)\in\vs\n
\end{eqnarray}
for the delta measure $\de(x)$ at point $x$.
We sometimes identify the measure $\si$ with the representation of $\moap{d}$.
The measure $\si$ is a probability measure if $\sum_{i}f_i=1$.

The {\it index} of the representation $\eqref{rep}$ is defined as the number of the points
$(x_1,\cdots,x_l)$ 
under the special convention that the points $a,b$ are counted as one half.
A representation is called {\it principal} if its index is $(d+1)/2$.
Furthermore, the representation is {\it upper} if $(x_1,\cdots,x_l)$ contains $b$ and {\it lower} otherwise.

For the proof of Theorem \ref{thm_dminm}, it is necessary
to study the nature on the set $\vs(\moap{d})$.
It differs according to whether $\moap{d}$ is a boundary point of $\mm_{d+1}$
or an interior point of $\mm_{d+1}$.
\begin{proposition}[{\cite[Chap. II, Theorem 2.1]{karlin}}]\label{boundary}
Every boundary point $\moap{d}$ has a unique representation.
Moreover, $\moap{d}\in\mm_{d+1}$ is a boundary point of $\mm_{d+1}$ if and only if
there exists a representation of $\moap{d}$ with index at most $d/2$.
\end{proposition}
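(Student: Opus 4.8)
The plan is to use the description of $\mm_{d+1}$ as the convex conical hull of the moment curve $\{(u_0(x),\dots,u_d(x)):x\in[a,b]\}$, together with the fact that a nonzero element $u=\sum_{i=0}^d a_i u_i$ of the span of a $T$-system has at most $d$ zeros counted with multiplicity (using that the systems in question, e.g.\ $(1,x,\dots,x^d)$ with its generalised Vandermonde determinants, are in fact ET-systems, so multiplicities are meaningful). First I would characterise boundary points by supporting functionals: since $\mm_{d+1}$ is a closed convex cone with nonempty interior, $\moap{d}\in\mm_{d+1}$ lies on its boundary if and only if there is a nonzero $(a_0,\dots,a_d)$ with $\sum_{m=0}^d a_m M_m=0$ and $u(x):=\sum_{m=0}^d a_m u_m(x)\ge 0$ on $[a,b]$. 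For any representation $\si=\sum_{i=1}^l f_i\de(x_i)$ with $f_i>0$ of such a $\moap{d}$ one has $0=\sum_m a_m M_m=\int u\,\rd\si=\sum_i f_i u(x_i)$, and $u\ge 0$ forces $u(x_i)=0$ for every $i$; hence \emph{every} representation of a boundary point is carried by the finite set $Z(u)$.

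Next I would convert this into the index bound. Since $u\ge 0$, each zero $x_i$ in the open interval $(a,b)$ is a local minimum, hence of multiplicity at least $2$, while a zero at $a$ or $b$ has multiplicity at least $1$. As $u\neq 0$ has at most $d$ zeros counted with multiplicity, writing $p$ for the number of support points in $(a,b)$ and $q\in\{0,1,2\}$ for the number at an endpoint gives $2p+q\le d$, i.e.\ the index $p+q/2\le d/2$. This proves the ``only if'' half of the second assertion. For the ``if'' half, start from a representation of index at most $d/2$ supported on $\{x_1,\dots,x_l\}$ with $p$ interior and $q$ endpoint points, so $2p+q\le d$. Imposing a double zero at each interior $x_i$ and a simple zero at each endpoint $x_i$ is $2p+q\le d$ linear conditions on the $d+1$ coefficients $a_m$, so a nonzero $u$ satisfying them exists; after adjoining suitable auxiliary zero conditions to raise the prescribed multiplicity to $d$, the bound $Z(u)\le d$ forces $u$ to have exactly these zeros with exactly these multiplicities, so $u$ changes sign nowhere and, after a sign choice, $u\ge0$ with $\int u\,\rd\si=0$. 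Then $(a_m)$ is a supporting functional at $\moap{d}$, so $\moap{d}$ is a boundary point.

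For uniqueness at a boundary point, fix a supporting $u\ge0$ as above; its zero set $Z(u)$ has at most $d$ points and contains the support of every representation of $\moap{d}$. If $\si_1$ and $\si_2$ both represent $\moap{d}$, then the union of their supports is a set $\{x_1,\dots,x_l\}\subseteq Z(u)$ with $l\le d<d+1$, and the masses $f_1,\dots,f_l$ of either measure solve the linear system $\sum_{j=1}^l f_j u_m(x_j)=M_m$ for $m=0,\dots,d$; the coefficient matrix has full column rank $l$ because the $l\times l$ minor formed from $u_0,\dots,u_{l-1}$ is nonzero by the $T$-system property, so the solution is unique and $\si_1=\si_2$.

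The main obstacle I expect is the ``if'' direction, namely producing a nonnegative $u$ in the span of the $T$-system whose zero set is exactly the prescribed support with the correct multiplicities: getting the multiplicity count right, handling the parity of $d-2p-q$, and ruling out extra sign changes (in particular in the delicate case where both endpoints already lie in the support) is precisely the technical core of the Karlin--Studden theory, whereas the supporting-hyperplane characterisation and the uniqueness argument are comparatively routine.
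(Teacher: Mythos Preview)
The paper does not prove this proposition; it is quoted verbatim from Karlin--Studden \cite[Chap.~II, Theorem~2.1]{karlin} as a classical fact in the theory of moment spaces, so there is no original proof here to compare your proposal against. Your outline is essentially the standard textbook argument found in that reference: characterise boundary points by nonnegative supporting elements $u=\sum_m a_m u_m$, observe that any representation of $\moap{d}$ must be carried by $Z(u)$, and translate nonnegativity into the index bound via zero-counting.

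Two small technical points are worth flagging. First, you invoke zeros ``with multiplicity'' and restrict to ET-systems to make this meaningful; the Karlin--Studden proof avoids this assumption by counting \emph{nonnodal} zeros (interior zeros at which $u$ does not change sign) with weight $2$, which works for arbitrary $T$-systems and gives exactly $2p+q\le d$ without differentiability. Since the paper ultimately only applies the proposition to the polynomial system $(1,x,\dots,x^d)$, your ET-system shortcut is harmless here, but it is not the level of generality at which the proposition is stated. Second, in the uniqueness step you assert that ``the $l\times l$ minor formed from $u_0,\dots,u_{l-1}$ is nonzero by the $T$-system property''; this would require $(u_0,\dots,u_{l-1})$ itself to be a $T$-system, which is not part of the hypothesis. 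The correct (and equally easy) argument is that if two distinct representations were supported on $l\le d$ common points, their difference would place a nonzero vector in the kernel of the $(d{+}1)\times(d{+}1)$ matrix $[u_m(x_j)]$ obtained by adjoining $d{+}1-l$ arbitrary extra nodes, contradicting the $T$-system determinant condition directly. With these adjustments your sketch matches the classical proof, including your correct identification of the ``if'' direction (constructing a nonnegative $u$ with prescribed zero pattern) as the step requiring the most care.
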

\begin{proposition}[{\cite[Chap. II, Theorem 3.1]{karlin}}]\label{interior_involve}
If $\moap{d}$ is an interior point of $\mm_{d+1}$ then, for arbitrary $x^*\in [a,b]$,
there exists a representation of $\moap{d}$ such that
$(x_1,\cdots,x_l)$ contains $x^*$.
\end{proposition}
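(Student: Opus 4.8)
The plan is to deduce the statement from the conical structure of the moment space together with Carath\'eodory's theorem. Write $U(x):=(u_0(x),\dots,u_d(x))\in\bbbr^{d+1}$ and $\Gamma:=\{U(x):x\in[a,b]\}$, so that a representation of $\moap{d}$ is precisely a way of writing $\moap{d}$ as a finite nonnegative combination of points of $\Gamma$.

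First I would record two elementary facts. (i)~$\mm_{d+1}=\{R\,v:R\ge 0,\ v\in\mathrm{conv}(\Gamma)\}$: a measure $\si\in\vs$ with total mass $R:=\si([a,b])$ has moment vector $R\int U\,\rd(\si/R)$, which lies in $R\cdot\mathrm{conv}(\Gamma)$ because $\si/R$ is a probability measure, and conversely $\sum_i f_i U(x_i)$ is the moment vector of $\sum_i f_i\de(x_i)\in\vs$; hence, by Carath\'eodory's theorem, \emph{every} point of $\mm_{d+1}$ has a representation. (ii)~$U(x)\neq 0$ for every $x\in[a,b]$, since otherwise the determinant \eqref{determinant} evaluated at a set of $d+1$ distinct nodes containing this $x$ would have a zero row, contradicting that $(u_0,\dots,u_d)$ is a $T$-system.

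Now for the argument itself, fix $x^*\in[a,b]$. Since $\moap{d}$ is an interior point of $\mm_{d+1}$, there is $\ep_0>0$ with the ball $B(\moap{d},\ep_0)$ contained in $\mm_{d+1}$; put $t:=\ep_0/(2\|U(x^*)\|)>0$, which is well defined by (ii). Then $v:=\moap{d}-t\,U(x^*)$ satisfies $\|v-\moap{d}\|=\ep_0/2<\ep_0$, so $v\in\mm_{d+1}$, and by (i) it admits a representation $v=\sum_{i=1}^{l}f_iU(x_i)$ with $f_i>0$ and distinct $x_i\in[a,b]$. Therefore
\[
\moap{d}=t\,U(x^*)+\sum_{i=1}^{l}f_iU(x_i)
\]
is a representation of $\moap{d}$, and it contains $x^*$: if $x^*\notin\{x_1,\dots,x_l\}$ this is immediate, while if $x^*=x_j$ one merges the two corresponding terms into $(t+f_j)U(x^*)$, still with a positive coefficient.

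The individual steps are routine; the only substantive ingredient is fact~(i), that every point of $\mm_{d+1}$ has an atomic representation, and the idea of the proof is to \emph{subtract} $t\,U(x^*)$ from $\moap{d}$---which is legitimate precisely because $\moap{d}$ is an interior point---and then restore an atom at $x^*$. I expect fact~(i) to be the main point to nail down, though it is short via Carath\'eodory's theorem and is in any case classical.
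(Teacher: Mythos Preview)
Your argument is correct. The paper itself does not prove this proposition: it is quoted without proof from Karlin's monograph (Chap.~II, Theorem~3.1), so there is no in-paper proof to compare against. Your route---subtract a small multiple of $U(x^*)$ using the interior assumption, represent the remainder via Carath\'eodory, and add the atom back---is in fact the standard one and matches the classical argument in Karlin.

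Two minor remarks you might tighten in a final write-up. First, in fact~(i) the step ``$\int U\,\rd(\si/R)\in\mathrm{conv}(\Gamma)$'' uses that $\Gamma$ is compact (so that its convex hull is closed and contains barycenters of probability measures supported on it); this is immediate from continuity of the $u_i$ on $[a,b]$ but is worth stating. Second, the degenerate case $v=0$ is harmless---then $\moap{d}=t\,U(x^*)$ is already a one-point representation containing $x^*$---but since $\moap{d}$ is assumed interior and the moment cone is pointed (as $U$ never vanishes), $\moap{d}\neq 0$ anyway, and your choice of $t$ keeps $v$ strictly inside the cone. Neither point affects the validity of your proof.
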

\begin{proposition}[{\cite[Chap. II, Corollary 3.1]{karlin}}]\label{karlin_unique}
If $\moap{d}$ is an interior point of $\mm_{d+1}$ then
there exist precisely one upper and one lower principal representations of $\moap{d}$.
\end{proposition}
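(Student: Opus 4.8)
The plan is to prove existence and uniqueness of the two principal representations separately, specializing (exactly as in the proof of Theorem~\ref{thm_dminm}) to the power system $u_m(x)=x^m$ on $[a,b]$, so that the defining determinant \eqref{determinant} is the Vandermonde determinant; this is the only case the paper uses. Recall first that, by Prop.\,\ref{boundary}, an interior point $\moap{d}$ admits no representation of index at most $d/2$, so any representation I produce whose index is at most $(d+1)/2$ is automatically principal. I would obtain existence, together with the upper/lower labels, by an extremal argument in the augmented space. Adjoin $x^{d+1}$ and consider $\sigma\mapsto\int_a^b x^{d+1}\rd\sigma$ on $\vs(\moap{d})$, which is nonempty, convex and weak-$*$ compact; let $\sigma^+,\sigma^-$ attain its maximum $M^+$ and minimum $M^-$. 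Since $\moap{d}$ is interior the fibre $[M^-,M^+]$ is nondegenerate, so $\sigma^+\neq\sigma^-$. The point $(\moap{d},M^+)$ is a boundary point of $\mm_{d+2}$, hence carries a supporting hyperplane with normal $(c_0,\dots,c_{d+1})$; testing against point masses gives $R(x):=\sum_{m=0}^{d+1}c_m x^m\ge 0$ on $[a,b]$, while $\int R\,\rd\sigma^+=0$ forces $R$ to vanish on $\operatorname{supp}\sigma^+$. Because $M^+$ is the top of the fibre one checks $c_{d+1}<0$ (otherwise the hyperplane would support $\mm_{d+1}$ at the interior point $\moap{d}$), so after normalising $R$ has degree $d+1$ and negative leading coefficient.

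Next I would finish existence. The support of $\sigma^+$ consists of roots of $R$; since $R\ge 0$ on $[a,b]$, each interior support point is a double root, which bounds the index of $\sigma^+$ by $(d+1)/2$ and, by interiority, makes it principal. To see that $\sigma^+$ is the \emph{upper} representation I would count degrees: if $b$ were not a root then $R(b)>0$, and as the leading coefficient is negative $R$ must cross zero beyond $b$; combined with the double interior roots and a possible root at $a$ this yields strictly more than $d+1$ roots counted with multiplicity, exceeding $\deg R$, a contradiction. Hence $b\in\operatorname{supp}\sigma^+$ and $\sigma^+$ is upper; the symmetric argument for $\sigma^-$ (now $R\ge 0$ has positive leading coefficient and must have a root at $a$) shows $\sigma^-$ is lower. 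This produces at least one representation of each type.

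For uniqueness I would use a sign-counting argument that is independent of the above. The support pattern of a principal representation is forced by its index and parity: for even $d=2k$ an upper representation must sit on $k$ interior points together with $b$, and for odd $d=2k+1$ on $k$ interior points together with both $a$ and $b$, since otherwise the half-integer bookkeeping of the index $(d+1)/2$ fails. Suppose $\sigma,\sigma'$ are two distinct upper principal representations; then $\tau=\sigma-\sigma'$ is a nonzero signed measure with $\int x^m\rd\tau=0$ for $m=0,\dots,d$. Padding its support with extra points of zero weight up to $d+1$ distinct points and invoking the nonvanishing Vandermonde determinant shows that any such $\tau$ must be supported on at least $d+2$ points. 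But in each parity case the two admissible upper supports share their forced endpoints, so their union has at most $d+1$ distinct points, a contradiction; hence the upper representation is unique, and the identical count with the corresponding lower support patterns gives uniqueness of the lower one.

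I expect the main obstacle to be the existence step, and within it the identification of which endpoint the extremal measure charges: extracting the nonnegative polynomial $R$ from the supporting hyperplane and then running the degree/multiplicity count that forces $b\in\operatorname{supp}\sigma^+$ (respectively $a\in\operatorname{supp}\sigma^-$). By contrast the uniqueness step is an essentially combinatorial consequence of Vandermonde nonsingularity once the admissible support patterns have been pinned down.
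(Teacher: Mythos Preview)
The paper does not prove this proposition; it is quoted without proof from Karlin--Studden (Chapter~II, Corollary~3.1), so there is no in-paper argument to compare against.  Your sketch is essentially the classical route one finds in that reference: obtain existence by extremising the $(d{+}1)$-th moment over the weak-$*$ compact fibre $\vs(\moap{d})$, read off a nonnegative polynomial $R$ of degree $d{+}1$ from the supporting hyperplane of $\mm_{d+2}$, bound the index via a root-multiplicity count, and then establish uniqueness by the Vandermonde/support-counting argument.  So in spirit you are reproducing the cited source rather than diverging from the paper.

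One correction is needed in the endpoint identification for the lower case.  You write that for $\sigma^-$ the polynomial $R$ ``must have a root at $a$''; this is true for even $d$ but false for odd $d$, where the lower principal representation sits on $(d{+}1)/2$ interior points and contains neither endpoint.  What you actually need---and what the same degree count delivers---is that $b$ is \emph{not} a root of $R$ when the leading coefficient is positive: if it were, then since $R\ge 0$ on $[a,b]$ the root at $b$ would be simple with $R<0$ just beyond $b$, forcing an additional real root in $(b,\infty)$ and pushing the total multiplicity past $\deg R=d{+}1$, which contradicts the count you already established.  With that adjustment both the existence step (producing an upper and a lower principal representation) and your Vandermonde uniqueness step, whose support-union bound of $d{+}1$ points is correct in every parity case, go through.
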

We use Prop.\,\ref{karlin_unique}
implicitly in Prop.\,\ref{karlin_main} below and the proof of Theorem \ref{thm_dminm}.
Prop.\,\ref{karlin_main} is the main result of the appendix.
\begin{proposition}[{\cite[Chap. III, Theorem 1.1]{karlin}}]\label{karlin_main}
Assume $(u_0,u_1,\cdots,u_d)$ and $(u_0,u_1,\cdots,u_d,h)$ are T-systems.
Then
\begin{eqnarray}
\max_{\si\in \vs(\moap{d})}\int_a^b h(x)\rd \si(x)\n
\end{eqnarray}
is attained uniquely by $\bar{\si}$, the upper principal
representation of $\moap{d}$.
Similarly, 
\begin{eqnarray}
\min_{\si\in \vs(\moap{d})}\int_a^b h(x)\rd \si(x)\n
\end{eqnarray}
is attained uniquely by $\underline{\si}$, the lower principal 
representation of
$\moap{d}$.
\end{proposition}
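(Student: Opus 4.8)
The plan is to prove the $\max$ assertion; the $\min$ assertion then follows by applying it to the reflected data $x\mapsto a+b-x$, under which $\vs(\moap{d})$ is carried onto the analogous set for the reflected moment vector while the upper and lower principal representations are interchanged. If $\moap{d}$ is a boundary point of $\mm_{d+1}$ the claim is immediate, since by Prop.~\ref{boundary} the set $\vs(\moap{d})$ is then a singleton; so assume $\moap{d}$ is an interior point. First I would note that $\vs(\moap{d})$ is compact with respect to weak convergence (equivalently the L\'evy metric): it consists of positive measures of fixed total mass $\moapo_0$ on the compact interval $[a,b]$, and each moment constraint $\int_a^b u_m\,\rd\si=\moapo_m$ is closed; since $h$ is continuous, $\si\mapsto\int_a^b h\,\rd\si$ is continuous, so the maximum is attained.

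The core of the proof is an interpolation argument. Let $\bar\si=\sum_{i=1}^l f_i\,\de(x_i)$ be the upper principal representation of $\moap{d}$, which exists and is unique by Prop.~\ref{karlin_unique}; its support contains $b$, also contains $a$ exactly when $d$ is odd, and has index $(d+1)/2$. I would construct the function $P(x)=\sum_{m=0}^d c_m u_m(x)$ in $\mathrm{span}(u_0,\dots,u_d)$ that osculates $h$ along $\mathrm{supp}(\bar\si)$: matching value \emph{and} first derivative at each interior support point and matching only the value at whichever of $a,b$ lies in the support. A count shows that this imposes exactly $d+1$ linear conditions on the $d+1$ unknowns $c_m$ (a double zero of $h-P$ at each interior support point, a simple zero at each endpoint in the support), so $P$ exists and is unique as soon as the associated confluent Vandermonde-type determinant is nonzero; in the intended application $u_i=x^i$ and $h(x)=-\log(\ta-\tb x)$ are analytic, so $(u_0,\dots,u_d)$ is an extended $T$-system and no extra hypothesis is needed.

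Write $g:=h-P$; note $g\not\equiv 0$ because $(u_0,\dots,u_d,h)$ is linearly independent. Since $(u_0,\dots,u_d,h)$ is a $T$-system (extended, again automatic for analytic data, by the extreme-point bound used in the proof of Lemma~\ref{t_log}), every nonzero member of its span has at most $d+1$ zeros counted with multiplicity; the osculation conditions already account for $d+1$ such zeros, hence $g$ has no others, its interior zeros are sign-preserving, and $g$ keeps a constant sign on $[a,b]$. That sign is forced to be $g\le 0$ precisely because $\bar\si$ is the \emph{upper} principal representation — which one verifies by examining the sign of the relevant generalized Vandermonde determinant as $x\uparrow b$, exactly as in the proof of Lemma~\ref{t_log}. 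Then for every $\si\in\vs(\moap{d})$,
\[
\int_a^b h\,\rd\si=\int_a^b P\,\rd\si+\int_a^b g\,\rd\si\le\int_a^b P\,\rd\si=\int_a^b P\,\rd\bar\si=\int_a^b h\,\rd\bar\si\per
\]
Here $\int_a^b g\,\rd\si\le 0$ since $g\le 0$ and $\si\ge 0$; $\int_a^b P\,\rd\si=\int_a^b P\,\rd\bar\si$ since $\si,\bar\si$ share the moments $\moapo_0,\dots,\moapo_d$; and $\int_a^b h\,\rd\bar\si=\int_a^b P\,\rd\bar\si$ since $g$ vanishes on $\mathrm{supp}(\bar\si)$. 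Thus $\bar\si$ is a maximizer. If equality holds for some $\si$, then $\int_a^b g\,\rd\si=0$, so $\si$ is carried by the finite set $\{x:g(x)=0\}=\mathrm{supp}(\bar\si)$; matching $\moapo_0,\dots,\moapo_d$ then determines the weights uniquely, the matrix $\big(u_m(x_i)\big)$ being nonsingular by the $T$-system property, whence $\si=\bar\si$.

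I expect the delicate point to be pinning down the global sign $g\le 0$ on all of $[a,b]$: one must marry the $T$-system orientation of $(u_0,\dots,u_d,h)$ to the ``upper'' location of $\mathrm{supp}(\bar\si)$, and the zero-counting bookkeeping differs slightly according to the parity of $d$ (for $d$ even $a\notin\mathrm{supp}(\bar\si)$, for $d$ odd $a\in\mathrm{supp}(\bar\si)$) — this is exactly where the half-counting convention for the endpoints in the definition of index enters. A secondary technical issue is the existence of the osculating interpolant $P$, which needs a confluent (extended) $T$-system determinant to be nonzero; this is harmless here because every function involved is analytic.
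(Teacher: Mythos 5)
The paper offers no proof of this proposition at all: it is imported verbatim as Theorem~1.1 of Chapter~III of Karlin--Studden, so there is nothing internal to compare against, and your task was in effect to reconstruct a textbook theorem. What you have written is the classical Markov-style proof via osculatory interpolation, and it is sound in outline: the condition count ($d+1$ linear conditions from double zeros at interior support points and simple zeros at the endpoints in $\mathrm{supp}(\bar{\si})$, for both parities of $d$) is right, the zero-counting argument pinning $g=h-P$ to a single sign is right, and the uniqueness step via full column rank of $\bigl(u_m(x_i)\bigr)$ is right. Two caveats, both of which you flag yourself, deserve emphasis. First, osculatory interpolation and the bound ``at most $d+1$ zeros counted with multiplicity'' require the \emph{extended} $T$-system property, which is strictly stronger than the bare $T$-system hypothesis in the statement; Karlin--Studden's own proof avoids this by a purely geometric argument on the augmented moment space $\mm_{d+2}$ of $(u_0,\dots,u_d,h)$ (the extremal values of $\int h\,\rd\si$ over the fiber above $\moap{d}$ are boundary points of $\mm_{d+2}$, hence carried by representations of low index, which are then identified with the principal representations), so their version covers general continuous $T$-systems where yours does not. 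For the application in this paper ($u_i=x^i$ and $h(x)=-\log(\ta-\tb x)$, all analytic with the derivative count from Lemma~\ref{t_log}) the ET property holds and your argument suffices. Second, the reduction of the $\min$ case to the $\max$ case by reflection $x\mapsto a+b-x$ needs the sign of the reflected determinants checked: reversing the order of $d+2$ points multiplies the determinant by $(-1)^{(d+1)(d+2)/2}$, so the reflected system is a $T$-system only after possibly negating one function, and one must verify that upper and lower principal representations genuinely swap; this works out but is not free, and the symmetric direct argument (lower principal representation, $g\ge 0$) is cleaner. Neither point is a gap for the purposes of this paper.
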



\end{document}